\theoremstyle{plain}
   \newtheorem{theorem}{Theorem}[section]
   \newtheorem{proposition}[theorem]{Proposition}
   \newtheorem{lemma}[theorem]{Lemma}
   \newtheorem{corollary}[theorem]{Corollary}
   \newtheorem*{theorem*}{Theorem}
\theoremstyle{definition}
   \newtheorem{definition}[theorem]{Definition}
   \newtheorem{remark}[theorem]{Remark}
\numberwithin{equation}{section}
\newcommand{\op}[1]{\operatorname{#1}}
\newcommand{\defn}[1]{{\color{blue} \it {#1}}}
\newcommand{\AAA}{\mathcal{A}}
\newcommand{\CCC}{\mathcal{C}}
\newcommand{\LLL}{\mathcal{L}}
\newcommand{\PPP}{\mathcal{P}}
\newcommand{\QQQ}{\mathcal{Q}}
\newcommand{\far}{\mathcal{F}}
\newcommand{\nfar}{\mathcal{NF}}
\newcommand{\nfw}{\mathcal{G}}
\newcommand{\nfwse}{\nfw_{\mathrm{se}}}
\newcommand{\spn}{\mathrm{sp}}
\begin{document}

\title{Counting nearest faraway flats for Coxeter chambers}
\author{Theo Douvropoulos}
\maketitle


\begin{abstract}
In a finite Coxeter group $W$ and with two given conjugacy classes of parabolic subgroups $[X]$ and $[Y]$, we count those parabolic subgroups of $W$ in $[Y]$ that are full support, while simultaneously being \emph{simple} extensions (i.e., extensions by a single reflection) of some \emph{standard} parabolic subgroup of $W$ in $[X]$. The enumeration is given by a product formula that depends only on the two parabolic types. Our derivation is case-free and combines a geometric interpretation of the ``full support" property with a double counting argument involving Crapo's beta invariant. As a corollary, this approach gives the first case-free proof of Chapoton's formula for the number of reflections of full support in a real reflection group $W$.
\end{abstract}

\section{Introduction}
A long time ago in Waterloo, Ontario, Crapo \cite{crapo} introduced a numerical invariant for matroids which has since been known as the \emph{beta invariant}. When the matroid comes from an essential, central hyperplane arrangement $\AAA$, the beta invariant \defn{$\beta(\AAA)$} can be defined in terms of the characteristic polynomial $\chi(\AAA,t)$ as follows:
\begin{equation}
    \beta(\AAA):=(-1)^{\op{rank}(\AAA)-1}\cdot \dfrac{\op{d}\chi(\AAA,t)}{\op{d}t}\Big|_{t=1}.\label{eq:intro:defn:beta}
\end{equation}

Hyperplane arrangements and their invariants have been used widely to encode and study combinatorial objects. This has been very successful in the setting of Coxeter and Coxeter-Catalan combinatorics where remarkable enumerative formulas \cite{athan_enum,fomin_read,athan_tzan_enum,theo_lapl}  and structural theorems \cite{terao_exps,abe_euler,desc_alg,stump_free} associated to a finite Coxeter group $W$ can be phrased in terms of the reflection arrangement $\AAA_W$. Moreover, numerical invariants associated to reflection arrangements seem to behave particularly well. For example, the characteristic polynomial of $\AAA_W$ will always factor as $ \chi(\AAA_W,t)=\prod_{i=1}^n(t-e_i)$, where $n$ is the rank of the group $W$ and the \defn{$e_i$}'s are certain positive integers, known as the \defn{exponents} of $W$. We write them in increasing order $e_i\leq e_{i+1}$ and we always have $e_1=1$ because the arrangement $\AAA_W$ is central. From this information the definition in \eqref{eq:intro:defn:beta} implies that the beta invariant of $\AAA_W$ is given by
\begin{equation}
    \beta(\AAA_W)=\prod_{i=2}^n(e_i-1).\label{eq:intro:b(A_W)}
\end{equation}

The main motivation behind this paper is the realization that the right hand side of \eqref{eq:intro:b(A_W)} is a factor in the beautiful product formula below, due to Chapoton \cite{chapoton}, for the number of reflections of full support in a finite Coxeter group $W$. Recall that an element $g$ in a Coxeter group $W$ with simple generators $S$ is called \defn{full support} if all (equivalently, any of) its reduced $S$-decompositions involve the full set $S$ of simple reflections of $W$. 

\begin{proposition}[{\cite[Prop.~2.1]{chapoton}}]\label{Prop:intro:chap_form}
The number $f_W$ of reflections of full support in a finite, irreducible Coxeter group $W$ of rank $n$ is given by the formula
\begin{equation}
f_W=\dfrac{nh}{|W|}\cdot\prod_{i=2}^n(e_i-1),\label{eq:intro:chapoton_form}
\end{equation}
where $e_1,\ldots,e_n$ are the exponents of $W$ and $h:=e_n+1$ is the Coxeter number of $W$.
\end{proposition}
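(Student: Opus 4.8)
The plan is to reinterpret the ``full support'' condition geometrically and then evaluate one incidence count in two ways, the second evaluation being exactly where Crapo's $\beta(\AAA_W)$ enters. First I would establish a geometric characterization of full support reflections. Fix the fundamental chamber $C$ of $\AAA_W$; its closure $\overline{C}$ is a simplicial cone and every positive root $\gamma=\sum_i c_i\alpha_i$ is nonnegative on $\overline{C}$. If some $c_i$ vanishes then $\gamma$ vanishes on the $i$-th extreme ray of $\overline{C}$, whereas if all $c_i>0$ then $\gamma$ is strictly positive on $\overline{C}\setminus\{0\}$; hence $\overline{C}\cap H_\gamma=\{0\}$ precisely when $\gamma$ has full support as a vector. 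On the Coxeter side, the set of simple reflections occurring in a reduced word for $t_\gamma$ is the smallest $J\subseteq S$ with $t_\gamma\in W_J$, and since $t_\delta\in W_J$ iff $\delta\in\Phi_J$, this smallest $J$ is the vector support of $\gamma$. Thus $t_\gamma$ is a full support reflection iff $\overline{C}\cap H_\gamma=\{0\}$. Because reflections, positive roots and reflecting hyperplanes of $\AAA_W$ are in bijection and $W$ permutes the chambers transitively by isometries, this gives, for \emph{every} chamber $C'$,
\[
f_W=\#\{\, H\in\AAA_W:\overline{C'}\cap H=\{0\}\,\}.
\]

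Next I would fix instead a hyperplane $H\in\AAA_W$ and count the chambers $C'$ with $\overline{C'}\cap H=\{0\}$. Deconing $\AAA_W$ along $H$ produces an essential affine arrangement $d_H\AAA_W$ in $\mathbb{R}^{n-1}$ with $\chi(d_H\AAA_W,t)=\chi(\AAA_W,t)/(t-1)$, so by Zaslavsky's theorem the number of its bounded regions is $(-1)^{n-1}\chi'(\AAA_W,1)=\beta(\AAA_W)$. A chamber $C'$ on a fixed side of $H$ gives, upon slicing, a bounded region of $d_H\AAA_W$ iff the recession cone of that slice is trivial, i.e. iff $\overline{C'}\cap H=\{0\}$; accounting for both sides of $H$ via the antipodal symmetry $C'\mapsto -C'$ (and noting the count does not depend on $H$) yields
\[
\#\{\, \text{chambers } C':\overline{C'}\cap H=\{0\}\,\}=2\,\beta(\AAA_W).
\]

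The double count is then immediate. Counting the pairs $(C',H)$ with $\overline{C'}\cap H=\{0\}$ by summing over $H$ first gives $N\cdot 2\beta(\AAA_W)$, where $N=|\AAA_W|$ is the number of reflections, while summing over $C'$ first gives $|W|\cdot f_W$. Equating, and inserting $\beta(\AAA_W)=\prod_{i=2}^n(e_i-1)$ from \eqref{eq:intro:b(A_W)} together with the classical identity $2N=nh$ (which follows from $N=\sum_j e_j$ and $e_j+e_{n+1-j}=h$), gives
\[
f_W=\frac{2N}{|W|}\,\beta(\AAA_W)=\frac{nh}{|W|}\prod_{i=2}^n(e_i-1).
\]

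I expect the genuine work to be Step 1 and the first half of Step 2, not the bookkeeping: one must isolate the clean equivalence ``$t_\gamma$ has full support $\iff$ $H_\gamma$ is invisible from the closed fundamental chamber'' and recognize that, for a \emph{fixed} hyperplane, the number of chambers pointing away from it is governed by $\beta$ of the arrangement via a deconing. Both statements are uniform, so the argument is case-free, invoking only that $\AAA_W$ is essential, the factorization $\chi(\AAA_W,t)=\prod_j(t-e_j)$, the identity $2N=nh$, and Zaslavsky's formula. In the terminology of this paper the full support reflecting hyperplanes are exactly the ``faraway'' hyperplanes of a Coxeter chamber, so this is the instance of the general enumeration in which one extends the trivial standard parabolic by a single reflection; one expects the analogous double count, run with a standard parabolic $P$ of type $[X]$ and a slice transverse to $\op{Fix}(P)$ in place of $C$ and $H$, to produce the general product formula.
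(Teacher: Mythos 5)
Your proposal is correct and follows essentially the same route as the paper: the geometric equivalence ``$t$ full support $\iff$ $H\cap \overline{C_0}=\{\bm 0\}$'', the identification of $2\beta(\AAA_W)$ with the number of chambers meeting a fixed hyperplane only at the origin, and the double count of incident pairs. The only cosmetic differences are that you argue the support criterion via root coefficients rather than via faces of the fundamental chamber, and you obtain the $\beta$-interpretation by deconing along $H$ and applying Zaslavsky instead of quoting Greene--Zaslavsky's bounded-chamber count for $\AAA_W\cup\{H'\}$; both are equivalent formulations of the same facts.
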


Chapoton proved this formula in a case-by-case fashion relying on the classification of finite Coxeter groups and for many years no case-free proof was known. Thiel \cite{thiel} gave a uniform proof for Weyl groups $W$, but only after relating the reflections of full support with certain collections of order ideals in the root poset of $W$ that had been uniformly enumerated by Sommers \cite{sommers}. As a special case of our main Theorem~\ref{thm:main}, we will give a different proof of Chapoton's formula via a double-counting argument that exploits a combinatorial interpretation of the beta invariant  $\beta(\AAA_W)$. This proof is self-contained, case-free, and applies to all finite Coxeter groups (not just Weyl groups). Moreover, it is brief enough that we can sketch it below in \S\ref{sec:intro:proof} in just a few paragraphs. In \S\ref{sec:intro:main_thm} we present our main theorem, a generalization of Chapoton's formula where we count certain \emph{parabolic subgroups} $G\leq W$ that belong to given conjugacy classes and are of full support (with Proposition~\ref{Prop:intro:chap_form} corresponding to the special case that $G$ consists of the identity element and a single reflection).

\subsection{A case-free proof of Chapoton's formula}
\label{sec:intro:proof}

The first important ingredient of the proof is a new interpretation  (see Lemma~\ref{lem:geom_interpr_support} and Corollary~\ref{cor:suppor_interp}) of the \emph{full support} property in finite Coxeter groups $W$ with simple system $S$. Let $\AAA_W$ be the associated reflection arrangement of $W$ with ambient space $V$ and collection of chambers $\CCC(\AAA_W)$. If $C_0\in\CCC(\AAA_W)$ is the \emph{fundamental chamber} of $\AAA_W$ (determined by $S$), then a reflection $t\in W$ that is full support cannot fix any \emph{non-trivial} face $F$ of $C_0$; this would imply that $t$ has a reduced $S$-decomposition involving only the subset $J\subseteq S$ of simple generators that fix $F$. This argument also applies in the opposite direction, so that a reflection $t \in W$ is full support \emph{if and only if} its fixed hyperplane $H:=V^t$ intersects the fundamental chamber $C_0$ precisely at the origin $\bm 0\in V$.

It turns out that this condition is very much related to an enumerative  interpretation of the beta invariant. Slightly recasting a theorem of Greene and Zaslavksy \cite{greene_zaslavsky}, we show in Corollary~\ref{cor:beta=half_chambers} that for a real, essential, central arrangement $\AAA$ the number $\beta(\AAA)$ equals \emph{half} the quantity of chambers of $\AAA$ that intersect any fixed hyperplane $H\in\AAA$ precisely at the origin $\bm 0\in V$.

Now, if $\defn{N}:=|\AAA_W|$ denotes the number of reflections in $W$, equivalently hyperplanes in $\AAA_W$, the well known relation $hn=2N$ allows us --along with \eqref{eq:intro:b(A_W)}-- to rewrite Chapoton's formula \eqref{eq:intro:chapoton_form} as
\begin{equation}
    |W|\cdot f_W=N\cdot2\beta(\AAA_W).\label{eq:intro:|W|f_W=blah}
\end{equation}
The two sides of this equation reflect two different ways to enumerate the elements of the set 
\[
\mathfrak{P}(W):=\Big\{ (H,C)\in\AAA_W\times\CCC(\AAA_W)\ \ \text{such that}\ \ H\cap C=\bm 0\Big\}.
\]
Counting pairs $(H,C)\in\mathfrak{P}(W)$ starting with the hyperplane $H$ and then all suitable chambers $C$ we find a total of $N\cdot 2\beta(\AAA_W)$ elements, after the interpretation of $\beta(\AAA_W)$ we discussed. On the other hand, by our geometric interpretation of the full support property, the number of hyperplanes $H$ intersecting the fundamental chamber $C_0$ at the origin equals $f_W$. Moreover, the intersection patterns of hyperplanes and chambers are the same for all chambers and so if we count pairs $(H,C)\in \mathfrak{P}(W)$ starting with the chamber $C$ and then considering all suitable hyperplanes $H$, we will find a total of $|W|\cdot f_W$ elements.
\subsection{Our Main Theorem}
\label{sec:intro:main_thm}

After the previous discussion, it is natural to ask whether we can further apply this geometric interpretation of support and count other objects in finite Coxeter groups. Moreover, it is well known that for any flat $X\in\AAA_W$, the restricted arrangements $\AAA_W^X$ also have characteristic polynomials that factor in positive integer roots $b_1^X,\ldots,b_k^X$, $k=\dim(X)$, which are known as the Orlik-Solomon exponents of $\AAA^X$ (see \S\ref{ssec:OS-exps}). Is there a way to generalize Chapoton's formula \eqref{eq:intro:chapoton_form} using these exponents $b_i^X$?

It turns out that both these questions have positive answers. We briefly preview some definitions and notation before stating our main theorem (see \S\ref{sec:Coxeter_appl} for more details). In a finite Coxeter group $W$, we consider for any parabolic subgroup $W_Y\leq W$ two subsets $I,J\subseteq S$ of the set $S$ of simple reflections, called the \defn{core} and \defn{support} of $W_Y$ respectively as follows. We require that
\[
\langle I\rangle \leq W_Y\leq \langle J\rangle,
\]
and that $I$ is the maximum subset with this property and $J$ the minimum one.

The geometric interpretation of support discussed in \S\ref{sec:intro:proof} holds in this case too, so that $J=S$ for a flat $Y$ if and only if $Y\cap C_0=\bm 0$. We then call such flats \defn{faraway flats} for the fundamental chamber $C_0$. Now, it is easy to see that the core $I$ of $Y$ can have size at most equal to $\op{rank}(W_Y)-1$ if $W_Y$ is full support. We call such flats \defn{nearest faraway flats} for the fundamental chamber.

Our main theorem gives the enumeration of nearest faraway flats for the fundamental chamber, keeping track of the parabolic types of the flat and of its core.

\begin{restatable*}[]{theorem}{mainthm}
\label{thm:main}
Let $W$ be an irreducible, finite Coxeter group with reflection arrangement $\AAA$ and let $X$ and $Y$ be two flats in $\LLL_{\AAA}$ such that $\dim(X)=\dim(Y)+1$. Then, the set $\nfw\big([X]\big)_{[Y]}$, which consists of the parabolic subgroups of $W$ of type $[Y]$ that are full support and have core of parabolic type $[X]$ (see also Defn.~\ref{defn:sets:g(I)y}) has size given by the formula
\[
\Big|\nfw\big([X]\big)_{[Y]}\Big|=\dfrac{2\cdot u_{[X],[Y]}}{[N(X):W_X]}\cdot\prod_{i=2}^{\dim(X)}(b_i^X-1),
\]
where $N(X)$ and $W_X$ are respectively the setwise and pointwise stabilizers of $X$, $b_i^X$ denote the Orlik-Solomon exponents of the restricted arrangement $\AAA^X$, and the Orlik-Solomon number $u_{[X],[Y]}$ is defined to be the number of hyperplanes in $\AAA^X$ of parabolic type $[Y]$.
\end{restatable*}

Notice that the formula of Theorem~\ref{thm:main} gives a full generalization of Chapoton's formula \eqref{eq:intro:chapoton_form} (in fact of Thiel's generalization of it in \cite[Thm.~1.2]{thiel}) for any parabolic type. The fact alone that the quantity on the right hand side of the equation in Theorem~\ref{thm:main} is an integer is surprising (and as far as we know unknown till now).

\subsection*{Summary} In \S\ref{sec:double_counting} we present our interpretation of the beta invariant and the notions of faraway and nearest faraway flats in the general setting of real, central, simplicial arrangements. As we briefly discuss in \S\ref{ssec:free_arrts} we hope and expect to see applications of these ideas outside the context of Coxeter groups. In \S\ref{sec:Coxeter_appl} we translate the results of \S\ref{sec:double_counting} to the setting of our main Theorem~\ref{thm:main} and prove it. We finish in \S\ref{ssec:extras} with certain direct generalizations of Theorem~\ref{thm:main} and the discussion of possible further extensions of it.

\section{Double counting pairs of chambers and faraway planes}
\label{sec:double_counting}

In this section, we introduce the basic combinatorial objects of this paper and prove our main technical lemma (Lemma~\ref{lem: double_counting}). For any terminology or statements that are not explained here, the reader may consult the standard references \cite{OT_book,stanley_book,aguiar_book}.

\subsection{Hyperplane arrangements, faraway planes, and nearest faraway flats}
\label{ssec:definitions_far_nfar}

We will start first by fixing some notation and terminology. In this section, $\AAA$ will denote a real, essential, central hyperplane arrangement and $\CCC(\AAA)$ its collection of chambers. We will write $V$ for its ambient real vector space and $\bm 0$ for the origin of $V$. The intersection lattice of the arrangement $\AAA$ will be denoted by $\LLL_{\AAA}$ and we will call its elements $X\in\LLL_{\AAA}$ the flats of the arrangement. For any such flat $X\in\LLL_{\AAA}$, the \emph{restricted arrangement} $\AAA^X$ is a hyperplane arrangement with ambient space $X$, defined as the collection of top dimensional flats properly contained in $X$. More formally, we have
\[
\AAA^X:=\{H\cap X\ |\ H\in\AAA\ ,\ H\not\supseteq X\}.
\]
For any face $F\subseteq C$ of a chamber $C\in\CCC(\AAA)$ of the arrangement, $\spn(F)$  will denote the linear span of $F$, which is a flat in the intersection lattice $\LLL_{\AAA}$. Notice that $F$ itself is then a chamber of the restricted arrangement $\AAA^{\spn(F)}$. We will denote by $\dim(X)$ and $\dim(F)$ the respective dimensions of a flat $X$ and face $F$ (notice that $\dim(F)=\dim\big(\spn(F)\big)$).

We are now ready to give the proper definitions for the notions of faraway planes and nearest faraway flats discussed in the introduction.

\begin{definition}\label{defn: far planes}
For a real, essential, central hyperplane arrangement $\AAA$, and a chamber $C\in\CCC(\AAA)$, we say that a hyperplane $H\in\AAA$ is a \defn{faraway plane} for the chamber $C$ if it intersects $C$ only at the origin; that is, if $H\cap C=\bm 0$. We will write $\defn{\far_{\AAA}(C)}$ (or simply $\far(C)$ when there can be no confusion) for the collection of faraway planes associated to the chamber $C$. 

We will often want to restrict --without altering the ambient arrangement $\AAA$-- the collection of hyperplanes in which we count how many are faraway planes. For any subset $\PPP\subseteq \AAA$ we define \defn{$\far_{\AAA,\PPP}(C)$} to be the set of faraway planes for the chamber $C$ \emph{that belong to $\PPP$}.
\end{definition}

Similarly for an arrangement $\AAA$ as above and a chamber $C\in\CCC(\AAA)$, we will call a flat $X\in\LLL_{\AAA}$ a \defn{faraway flat} for $C$ if $X\cap C=\bm 0$. We are interested in a particular subset of faraway flats whose definition (Defn.~\ref{defn: nfar flats}) we motivate now. Notice that for any flat $X\in\LLL_{\AAA}$ and any chamber $C\in\CCC(\AAA)$, there must exist faces of $C$ whose span contains $X$ and among those at least one \emph{minimum-dimensional} such face $F$. Of course, we must have $\dim(F)\geq \dim(X)$ and equality would force that $C\cap X=F$ so that in the case of a flat $X$ that is \emph{faraway for $C$}, we must have have $\dim(F)\gneq \dim(X)$. It is natural to consider the faraway flats $X$ for which $\dim(F)=\dim(X)+1$ as being \emph{closest} or \emph{nearest} to the chamber $C$.

\begin{definition}\label{defn: nfar flats}
Let $\AAA$ be a real, essential, central hyperplane arrangement, with collection of chambers $\CCC(\AAA)$ and intersection lattice $\LLL_{\AAA}$. For a given chamber $C\in \CCC(\AAA)$ we will say that a flat $X\in\LLL_{\AAA}$ is a \defn{nearest faraway flat} for $C$ if it is a faraway flat (i.e., if $X\cap C=\bm 0$) and the minimum dimension of any face $F$ of $C$ whose span $\spn(F)$ contains $X$ is equal to $\dim(X)+1$.
\end{definition}

\begin{remark}\label{rem:0_is_not_nearest_far_flat}
Comparing Definitions~\ref{defn: far planes} and ~\ref{defn: nfar flats}, we see that the origin $\bm 0$ is considered a faraway flat (for any chamber) but it is not a \emph{nearest} faraway flat.
\end{remark}

Now, in general there may be multiple faces $F\subseteq C$ that satisfy the criteria of Defn.~\ref{defn: nfar flats} for a given flat $X$. For instance, take the arrangement of hyperplanes in $\mathbb{R}^3$ given as 
\[
\AAA_{sq}:=\{x-z=0,x+z=0,y-z=0,y+z=0\}
\]
(these four hyperplanes are the linear spans of the four sides of a \emph{square} pyramid with apex at the origin). If $C\in\CCC(\AAA_{sq})$ is the chamber that contains the positive $z$ axis in its interior, then the $1$-dimensional flat $X:=\{x=z=0\}$ (i.e., the $y$-axis) is a nearest faraway flat for $C$ and lives in the span of two distinct $2$-dimensional faces of $C$ (those that correspond to the hyperplanes $x-z=0$ and $x+z=0$). 

There is however, a class of arrangements for which each nearest faraway flat has a uniquely determined associated face $F$ as in Defn.~\ref{defn: nfar flats}. Recall that an essential, central hyperplane arrangement is called \defn{simplicial} when all its chambers are simplicial cones; i.e., cones whose rays form a basis of the ambient space $V$.

\begin{lemma}
Let $\AAA$ be a central, essential, simplicial, hyperplane arrangement, $C\in\CCC(\AAA)$ one of its chambers, and $X\in\LLL_{\AAA}$ one of its flats. If $X$ is a nearest faraway flat for $C$, then there exists a unique face $F$ of $C$ that satisfies $\spn(F)\supseteq X$ and $\dim(F)=\dim(X)+1$.
\end{lemma}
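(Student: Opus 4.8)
The plan is to use the one special feature of a simplicial arrangement: the chamber $C$ is a simplicial cone, so its extreme rays $\rho_1,\dots,\rho_n$ form a basis of the ambient space $V$ (with $n=\dim V$), and the faces of $C$ are exactly the subcones $F_T:=\operatorname{cone}\{\rho_i : i\in T\}$ indexed by subsets $T\subseteq\{1,\dots,n\}$, with $\dim F_T=|T|$ and $\spn(F_T)=\operatorname{span}\{\rho_i : i\in T\}$. With this description, the existence of a face $F$ with $\spn(F)\supseteq X$ and $\dim F=\dim X+1$ is immediate from Definition~\ref{defn: nfar flats}: that definition says precisely that $\dim X+1$ is the minimum dimension among all faces of $C$ whose span contains $X$, and this minimum is attained (the set of such faces is nonempty, as $C$ itself qualifies). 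So the content of the lemma is uniqueness.

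I would prove uniqueness by contradiction. Suppose $F_{T_1}$ and $F_{T_2}$ are two \emph{distinct} faces of $C$ with $\spn(F_{T_j})\supseteq X$ and $|T_1|=|T_2|=d+1$, where $d:=\dim X$. The key elementary observation, and the one place simpliciality is used, is that because $\rho_1,\dots,\rho_n$ are linearly independent, spans of subsets of this basis intersect combinatorially:
\[
\operatorname{span}\{\rho_i : i\in T_1\}\cap\operatorname{span}\{\rho_i : i\in T_2\}=\operatorname{span}\{\rho_i : i\in T_1\cap T_2\},
\]
which one checks by comparing coordinate expansions. Since $X$ lies in both spans, $X\subseteq\operatorname{span}\{\rho_i : i\in T_1\cap T_2\}$, hence $d=\dim X\le |T_1\cap T_2|$; on the other hand $T_1\neq T_2$ with $|T_1|=|T_2|=d+1$ forces $|T_1\cap T_2|\le d$. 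Therefore $|T_1\cap T_2|=d$ and $X=\operatorname{span}\{\rho_i : i\in T_1\cap T_2\}=\spn(F_{T_1\cap T_2})$.

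Finally, $F_{T_1\cap T_2}$ is a face of $C$ contained in its own linear span $X$, so $F_{T_1\cap T_2}\subseteq X\cap C$; but $X$ is a faraway flat, so $X\cap C=\bm 0$, which forces $F_{T_1\cap T_2}=\bm 0$, hence $T_1\cap T_2=\emptyset$ and $X=\bm 0$. This contradicts Remark~\ref{rem:0_is_not_nearest_far_flat}, which records that $\bm 0$ is never a nearest faraway flat. Hence $F_{T_1}=F_{T_2}$, proving uniqueness. I do not anticipate a real obstacle: the only steps needing care are the explicit face-lattice description of a simplicial cone and the basis-intersection identity above — both routine, but both genuinely failing for non-simplicial arrangements such as $\AAA_{sq}$, which is exactly why the hypothesis is needed.
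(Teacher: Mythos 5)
Your proof is correct and follows essentially the same route as the paper's: both argue by contradiction that two distinct qualifying faces $F_1,F_2$ would force $X=\spn(F_1)\cap\spn(F_2)=\spn(F_1\cap F_2)$, with simpliciality entering exactly through the linear independence of the chamber's extreme rays. If anything, your explicit treatment of the degenerate case $F_{T_1\cap T_2}=\bm 0$ (ruled out via Remark~\ref{rem:0_is_not_nearest_far_flat}) is slightly more careful than the paper's, which concludes by citing a contradiction with $X$ being faraway.
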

\begin{proof}
Assume on the contrary that there are two distinct faces $F_1$ and $F_2$ of $C$ satisfying the conditions of the statement. Clearly, we must have $\spn(F_1)\neq \spn(F_2)$ so that the assumed conditions force  $\spn(F_1)\cap\spn(F_2)=X$. Now, since $\AAA$ is simplicial, we must further have that
\[
\spn(F_1)\cap\spn(F_2)=\spn(F_1\cap F_2),
\]
because the union of the rays that span the faces $F_1$ and $F_2$ is a linearly independent collection. But the intersection $F_1\cap F_2$ is itself a face of $C$ and the equality $X=\spn(F_1\cap F_2)$ contradicts the assumption that $X$ is a faraway flat. This completes the proof.
\end{proof}

This allows us to give the following definitions.

\begin{definition}\label{defn: associated_face}
Let $\AAA$ be a real, essential, central, \emph{simplicial} hyperplane arrangement. For a chamber $C\in\CCC(\AAA)$ and a flat $X\in\LLL_{\AAA}$ that is a nearest faraway flat for $C$, we call the \emph{unique} face $F$ that satisfies the conditions of Defn.~\ref{defn: nfar flats} the \defn{associated face} to the (nearest faraway) flat $X$. We write \defn{$\nfar_{\AAA}(C,F)$} (or $\nfar(C,F)$ when there can be no confusion) for the collection of nearest faraway flats for the chamber $C$, with associated face $F$. 

As in Defn.~\ref{defn: far planes} we will often want to restrict the family of flats that are considered. For any subset $\QQQ\subseteq \LLL_{\AAA}$, we define \defn{$\nfar_{\AAA,\QQQ}(C,F)$} to be the set of nearest faraway flats for the chamber $C$ and with associated face $F$, \emph{that belong to $\QQQ$}.
\end{definition}

\begin{remark}\label{rem:faraway planes are always nearest faraway flats}
Comparing Definitions~\ref{defn: far planes},~\ref{defn: nfar flats}, and~\ref{defn: associated_face}, notice that --given a simplicial\footnote{It is not necessary to assume simpliciality here since there can only be a single full-dimensional face in any chamber; we do it to keep the statement formally compatible with Defn.~\ref{defn: associated_face}.} arrangement $\AAA$ and a chamber $C$ as in the discussion so far-- a faraway plane is always a nearest faraway flat with associated face the whole chamber $C$. In particular, we will have for any subset $\PPP\subseteq \AAA$ that
\[
\far_{\AAA,\PPP}(C)=\nfar_{\AAA,\PPP}(C,C),
\]
where we treat $\PPP\subseteq \AAA$ also as a collection of flats, i.e. as a subset of $\LLL_{\AAA}$.
\end{remark}

On the opposite direction to Remark~\ref{rem:faraway planes are always nearest faraway flats} above, the condition that faraway flats be \emph{nearest} allows us to relate them to faraway planes in restricted arrangements. In particular, the number of nearest faraway flats for a chamber $C$, with associated face $F$, must equal the number of faraway planes in the restricted arrangement $\AAA^{\spn(F)}$ for the chamber $F\in\CCC(\AAA^{\spn(F)})$. We formalize this below.

\begin{proposition}\label{prop:nfar-to-far}
Let $\AAA$ be a simplicial hyperplane arrangement as in Defn.~\ref{defn: associated_face} with a chosen chamber $C\in\CCC(\AAA)$ and face $F\subseteq C$. If $\QQQ\subseteq \LLL_{\AAA}$ is any subset of flats with $\bm 0\notin \QQQ$, we will have
\[
\far_{\AAA^{\spn(F)},\QQQ\cap\AAA^{\spn(F)}}(F)=\nfar_{\AAA,\QQQ}(C,F).
\]
\end{proposition}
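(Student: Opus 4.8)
The plan is to show the two sets coincide by proving containment in both directions, using the ``associated face'' structure (the preceding Lemma) to translate between flats of $\AAA$ that are nearest faraway for $(C,F)$ and flats of the restricted arrangement $\AAA^{\spn(F)}$ that are faraway planes for the chamber $F$. First I would unwind both sides using the definitions. A flat $X$ lies in $\nfar_{\AAA,\QQQ}(C,F)$ iff $X\in\QQQ$, $X\cap C=\bm 0$, $\spn(F)\supseteq X$, $\dim(F)=\dim(X)+1$, and $F$ is (by the Lemma) the \emph{unique} minimum-dimensional face of $C$ whose span contains $X$. On the other side, a flat $Y$ lies in $\far_{\AAA^{\spn(F)},\QQQ\cap\AAA^{\spn(F)}}(F)$ iff $Y\in\AAA^{\spn(F)}$ (i.e. $Y$ is a hyperplane of the restricted arrangement, equivalently a flat of $\LLL_\AAA$ of codimension $1$ in $\spn(F)$ that does not contain $\spn(F)$), $Y\in\QQQ$, and $Y\cap F=\bm 0$.

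The key observation linking the two is that $F$, being a face of the simplicial cone $C$, is itself a simplicial cone, and its full-dimensional ambient space is $\spn(F)$; moreover the faces of the chamber $F$ in $\AAA^{\spn(F)}$ are precisely the faces $G\subseteq F$ of $C$ that are contained in $F$, with $\spn(G)$ the same whether computed in $\AAA$ or $\AAA^{\spn(F)}$. With this dictionary, given $X\in\nfar_{\AAA,\QQQ}(C,F)$: since $X\subseteq \spn(F)$ and $X$ is a flat of $\LLL_\AAA$ with $\dim(X)=\dim(F)-1<\dim(F)$, $X$ does not contain $\spn(F)$, so $X\in\AAA^{\spn(F)}$; and $X\cap F\subseteq X\cap C=\bm 0$, so $X$ is a faraway plane for $F$, and $X\in\QQQ\cap\AAA^{\spn(F)}$. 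Conversely, given $Y$ a faraway plane for $F$ in $\AAA^{\spn(F)}$ with $Y\in\QQQ$: then $\dim(Y)=\dim(F)-1$, $\spn(F)\supseteq Y$, and I must check that $Y\cap C=\bm 0$ and that the minimum dimension of a face of $C$ whose span contains $Y$ is exactly $\dim(Y)+1=\dim(F)$ --- the latter gives $Y\in\nfar_{\AAA,\QQQ}(C,F)$ (uniqueness of the associated face being automatic from the Lemma, and it must be $F$ since $F$ realizes the minimum). The statement $\bm 0\notin\QQQ$ is used to know $Y\neq\bm 0$, so that $\dim(Y)\geq 1$ is genuinely a codimension-one flat inside $\spn(F)$ and the restricted arrangement language applies cleanly; it also matches Remark~\ref{rem:0_is_not_nearest_far_flat}.

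I expect the main obstacle to be the two verifications in the ``conversely'' direction, namely that $Y\cap C=\bm 0$ and that no face of $C$ of dimension $\le\dim(F)-1$ has span containing $Y$. For the first: if $Y\cap C$ contained a nonzero point $p$, then $p\in \spn(F)\cap C$; but in a simplicial arrangement a point of $C$ lying in $\spn(F)$ must lie in $F$ itself (the face of $C$ whose span is $\spn(F)$), since the rays of $C$ form a basis of $V$ and $F$ is the subcone spanned by exactly those rays of $C$ lying in $\spn(F)$ --- so $p\in Y\cap F=\bm 0$, contradiction. For the second: any face $G\subseteq C$ with $\spn(G)\supseteq Y$ has $\spn(G)\cap\spn(F)\supseteq Y$; if $\dim(G)\le \dim(F)-1$ and $G\neq$ a face of $F$, simpliciality gives $\spn(G)\cap\spn(F)=\spn(G\cap F)$, a flat of dimension $<\dim(F)$, and if this equals $Y$ then $G\cap F$ is a face of $C$ with span $Y\neq\bm 0$, contradicting that $Y$ is faraway for $C$ (which we just proved); if instead $G\subseteq F$ then $G$ is a face of the simplicial chamber $F$ with $\spn(G)\supseteq Y$ of dimension $\le\dim(F)-1=\dim(Y)$, forcing $\spn(G)=Y$ and hence $Y\cap F\supseteq G\ni$ nonzero points, again contradicting that $Y$ is faraway for $F$. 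Thus the minimum such dimension is $\dim(F)$, realized by $F$, completing the argument. Throughout, the only nontrivial input beyond the definitions is simpliciality, exactly as in the preceding Lemma, so the write-up should be short.
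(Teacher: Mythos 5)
Your proposal is correct and follows essentially the same route as the paper: unwind both definitions, use that $\spn(F)\cap C=F$ (so $Y\cap C\subseteq F$, forcing $Y\cap C=Y\cap F=\bm 0$) for one inclusion, and the trivial containment $X\cap F\subseteq X\cap C$ for the other. You are somewhat more explicit than the paper in verifying the ``nearest'' and ``associated face'' conditions in the converse direction (the paper treats these as immediate from the dimension count $\dim(Y)=\dim(F)-1$ together with the observation, made before Definition~\ref{defn: nfar flats}, that a faraway flat cannot lie in the span of a face of equal dimension), but the substance is identical.
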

\begin{proof}
The requirement $\bm 0\notin \QQQ$ is to avoid the phenomenon of Remark~\ref{rem:0_is_not_nearest_far_flat}. 

To prove the statement, we will show that the corresponding collections of faraway planes in $\AAA^{\spn(F)}$ and nearest faraway flats in $\AAA$ are equal by demonstrating the inclusions $\subseteq$ and $\supseteq$ between them. To show $\supseteq$, we start with a flat $X\in\nfar_{\AAA,\QQQ}(C,F)$. This means that $X\cap C=\bm 0$, which further implies $X\cap F=0$ since $F$ is a face of $C$. Moreover by Definition~\ref{defn: associated_face} $X$ also satisfies $\dim(X)=\dim(F)-1$ and $X\subseteq \spn(F)$, which completes the proof for this direction. 

To prove the inclusion $\subseteq$, we start with a hyperplane $Z\in\QQQ\cap\AAA^{\spn(F)}$ that satisfies $Z\cap F=\bm 0$. Since $Z\subseteq \spn(F)$ we will have that
\[
Z\cap C\subseteq \spn(F)\cap C=F,
\]
where the last equality holds because $F$ is a face of the (convex) chamber $C$. Now $Z\cap C\subseteq F$ implies that $ Z\cap C=Z\cap F=\bm0 $, which completes the direction $\subseteq$ and thus the proof.
\end{proof}

\subsection{Crapo's beta invariant and the double counting lemma}\label{ssec:double_counting_lemma}

We start as in \S\ref{ssec:definitions_far_nfar} with a real, essential (but not necessarily central or simplicial) hyperplane arrangement $\AAA$ with intersection lattice $\LLL_{\AAA}$ and ambient space $V$. Recall that its \defn{characteristic polynomial} $\chi(\AAA,t)$ is defined in terms of the M{\"o}bius function $\mu$ on $\LLL_{\AAA}$ (where the order is inverse inclusion of flats) as
\begin{equation}
    \defn{\chi(\AAA,t)}:=\sum_{X\in\LLL_{\AAA}}\mu(V,X)\cdot t^{\dim(X)}.
\end{equation}
As is well known, many important combinatorial invariants of the arrangement $\AAA$ are encoded in its characteristic polynomial. For instance the number $\defn{b(\AAA)}$ of \emph{bounded} chambers of $\AAA$ is given as 
\begin{equation}
    b(\AAA)= (-1)^{\op{rank}(\AAA)}\cdot \chi(\AAA,1),
\end{equation}
where the \defn{rank} of the arrangement is the dimension $\dim(V)$ of the ambient space\footnote{Nore generally, the rank of an arrangement $\AAA$ equals the difference between the dimension of the ambient space and the center of $\AAA$; in our case, since we assume $\AAA$ to be essential, its center --the origin $\bm 0$-- is $0$-dimensional.}. In this paper we are interested in a similar invariant, defined in terms of the \emph{derivative} of the characteristic polynomial. Now, we further assume that $\AAA$ is central.

\begin{definition}[Crapo's beta invariant]\label{defn:beta(A)}
For a real, essential, central hyperplane arrangement $\AAA$ with characteristic polynomial $\chi(\AAA,t)$, we define its \defn{beta invariant} $\beta(\AAA)$ as
\[
\defn{\beta(\AAA)}:=(-1)^{\op{rank}(\AAA)-1}\cdot \dfrac{\op{d}\chi(\AAA,t)}{\op{d}t}\Big|_{t=1},
\]
where $\op{d}/\op{dt}$ denotes differentiation with respect to $t$. 
\end{definition}

It is not very difficult to see that $\beta(\AAA)$ must be a \emph{positive} integer; much more than that, it has an enumerative interpretation. The following proposition is \cite[Thm.~3.4]{greene_zaslavsky} but essentially its proof comes down to \cite[Theorem~D]{zaslavsky_facing_up}. It also appears as Exercise~22:(d) in \cite[Lec.~4]{stanley_book}.

\begin{proposition}[{\cite[Thm.~3.4]{greene_zaslavsky}}]\label{Prop: beta interpretation}
Let $\AAA$ be a real, essential, central hyperplane arrangement and let $H'$ be a proper translate of an arbitrary hyperplane $H\in\AAA$. Then, Crapo's beta invariant for $\AAA$ agrees with the number of bounded chambers in the arrangement $\AAA\cup \{H'\}$; that is, we have
\[
\beta(\AAA)=b\left(\AAA\cup\{H'\}\right).
\]
\end{proposition}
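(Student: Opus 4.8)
The plan is to prove the identity $\beta(\AAA) = b(\AAA \cup \{H'\})$ by relating the characteristic polynomial of the translated arrangement $\AAA' := \AAA \cup \{H'\}$ to that of $\AAA$ via deletion–restriction, and then evaluating at $t = 1$. The key point is that $\AAA'$ is no longer central (the translate $H'$ misses the origin), so its bounded region count is governed by $(-1)^{\op{rank}(\AAA')}\chi(\AAA', 1)$, and I want to show this equals the derivative expression defining $\beta(\AAA)$.

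First I would set up deletion–restriction with respect to the new hyperplane $H'$. Deleting $H'$ from $\AAA'$ returns $\AAA$, while restricting to $H'$ gives the arrangement $(\AAA')^{H'}$ induced on $H'$ by the hyperplanes of $\AAA$. The Whitney/deletion–restriction recursion for characteristic polynomials reads
\[
\chi(\AAA', t) = \chi(\AAA, t) - \chi\big((\AAA')^{H'}, t\big).
\]
The crucial observation is that $(\AAA')^{H'}$ is affinely isomorphic to the arrangement obtained by slicing $\AAA$ with a generic-position affine translate of $H$: because $H' \parallel H$ but avoids $\bm 0$, each flat of $\AAA$ not contained in $H$ meets $H'$ in an affine flat of one lower dimension, and this correspondence is a poset isomorphism from $\LLL_{\AAA} \setminus [\text{flats} \supseteq H]$ onto $\LLL_{(\AAA')^{H'}}$. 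Standard facts about generic hyperplane sections (see \cite{OT_book,stanley_book}) then give $\chi\big((\AAA')^{H'}, t\big) = \dfrac{\chi(\AAA, t)}{t - 1}$, using that $\AAA$ is central so $(t-1)$ divides $\chi(\AAA, t)$.

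Substituting, I get $\chi(\AAA', t) = \chi(\AAA, t)\cdot\dfrac{t-2}{t-1}$; more usefully, writing $\chi(\AAA, t) = (t-1)\,q(t)$ with $q(t) = \chi(\AAA,t)/(t-1)$ a polynomial, we have $\chi(\AAA', t) = (t-2)\,q(t)$. Evaluating at $t = 1$ yields $\chi(\AAA', 1) = -q(1)$. On the other hand, differentiating $\chi(\AAA, t) = (t-1)q(t)$ gives $\chi'(\AAA, t) = q(t) + (t-1)q'(t)$, so $\chi'(\AAA, 1) = q(1)$. Since $\op{rank}(\AAA') = \op{rank}(\AAA) = \dim V$ (the translate does not change the dimension of the ambient space, and $\AAA'$ has empty center, so its rank equals $\dim V$), we conclude
\[
b(\AAA') = (-1)^{\op{rank}(\AAA')}\chi(\AAA', 1) = (-1)^{\op{rank}(\AAA)}\cdot\big(-q(1)\big) = (-1)^{\op{rank}(\AAA)-1}\chi'(\AAA,1) = \beta(\AAA),
\]
which is the claim.

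The main obstacle I anticipate is justifying cleanly the formula $\chi\big((\AAA')^{H'}, t\big) = \chi(\AAA,t)/(t-1)$ — that is, identifying the intersection poset of the generic affine slice with the truncated intersection lattice of $\AAA$ and checking that Möbius values and dimensions transport correctly under the "drop by one" map. This is where centrality of $\AAA$ and the fact that $H'$ is a \emph{proper} (generic, origin-avoiding) translate of $H\in\AAA$ are both used essentially: centrality ensures $(t-1) \mid \chi(\AAA,t)$, and the properness of the translate ensures $H'$ is in sufficiently general position that no unexpected coincidences of flats occur on $H'$. Everything else is bookkeeping with the deletion–restriction recursion and Zaslavsky's formula for the number of bounded regions.
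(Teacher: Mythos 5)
The paper does not actually prove this proposition --- it is quoted from Greene--Zaslavsky, with the remark that the proof reduces to Zaslavsky's Theorem~D (it is also Exercise~22(d) in Lecture~4 of Stanley's notes). So your deletion--restriction argument is necessarily a ``different route,'' and it is in substance the standard one: delete/restrict at $H'$, use Zaslavsky for bounded regions, and compare with the derivative of $\chi(\AAA,t)$ at $t=1$. The computation $\chi(\AAA',t)=(t-2)q(t)$, the evaluation at $t=1$, and the sign bookkeeping are all correct.

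One step is mislabeled in a way you should fix, because as literally stated it would give the wrong polynomial. The restriction $(\AAA')^{H'}$ is \emph{not} a generic hyperplane section of $\AAA$: $H'$ is parallel to $H\in\AAA$, which is the opposite of generic position. The standard fact about generic sections of a central essential arrangement yields $\sum_{X\neq \bm 0}\mu(V,X)\,t^{\dim X-1}$, which differs from $\chi(\AAA,t)/(t-1)$ in general (e.g.\ for the Boolean arrangement in $\mathbb{R}^2$ these are $t-2$ and $t-1$). What you actually have is the \emph{decone} of $\AAA$ with respect to $H$, and the identity $\chi\big((\AAA')^{H'},t\big)=\chi(\AAA,t)/(t-1)$ is the cone--decone relation (\cite[Prop.~2.51]{OT_book}). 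Happily, the poset isomorphism you describe --- $X\mapsto X\cap H'$ on the flats $X\not\subseteq H$ (note: not ``$\supseteq H$'') --- is exactly the content of that relation, and it holds for \emph{any} proper translate $H'$ with no genericity assumption: injectivity is automatic because the linear span of $X\cap H'$ recovers $X$, and intervals below $V$ are preserved since $Z\supseteq X$ forces $Z\not\subseteq H$. So your anticipated ``main obstacle'' dissolves once you invoke the right standard fact; with that relabeling the proof is complete.
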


\begin{corollary}\label{cor:beta=half_chambers}
Let $\AAA$ be as in Proposition~\ref{Prop: beta interpretation} and let $H\in\AAA$ be any given hyperplane. Then, the beta invariant $\beta(\AAA)$ equals half the number of chambers whose intersection with $H$ is the origin $\bm 0$.
\end{corollary}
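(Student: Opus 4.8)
The plan is to apply Proposition~\ref{Prop: beta interpretation} with a concretely chosen translate of $H$ and then identify the bounded chambers of the enlarged arrangement combinatorially. Fix a linear functional $f\in V^*$ with $H=\ker f$ and set $H':=\{f=1\}$, a proper translate of $H$. By Proposition~\ref{Prop: beta interpretation} we have $\beta(\AAA)=b\big(\AAA\cup\{H'\}\big)$, so it suffices to count the bounded chambers of $\AAA\cup\{H'\}$.

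The chambers of $\AAA\cup\{H'\}$ are obtained from the chambers of $\AAA$ by cutting each $C\in\CCC(\AAA)$ along $H'$: if $H'$ misses $\operatorname{int}(C)$ then $C$ survives intact, and otherwise $C$ splits into the two pieces $C\cap\{f\le 1\}$ and $C\cap\{f\ge 1\}$. Since $\AAA$ is essential, each $C$ is a full-dimensional pointed cone with apex $\bm 0$, so $\bm 0$ is never in $\operatorname{int}(C)$ and $f$ restricted to the connected set $C\setminus\{\bm 0\}$ is of constant sign whenever it is nonvanishing; hence ``$f>0$ on $C\setminus\{\bm 0\}$'' is equivalent to ``$H\cap C=\bm 0$ and $C\subseteq\{f\ge 0\}$''. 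I will show that the bounded chambers of $\AAA\cup\{H'\}$ are exactly the pieces $C\cap\{f\le 1\}$ coming from such $C$, each contributing exactly one: for these $C$ a routine compactness argument (rescaling an escaping sequence to the unit sphere and using $C\cap H=\bm 0$) shows $C\cap\{f\le 1\}$ is bounded, while $C\cap\{f\ge 1\}$ contains a whole ray, hence is unbounded; and for every other chamber of $\AAA$ — those with $H\cap C\neq\bm 0$, those crossing $H$, and those lying in $\{f\le 0\}$ — either $H'$ does not meet $\operatorname{int}(C)$ or both resulting pieces contain a ray on which $f$ stays $\le 1$ or $\ge 1$ respectively, so no bounded chamber is produced. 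This gives
\[
\beta(\AAA)=b\big(\AAA\cup\{H'\}\big)=\#\big\{C\in\CCC(\AAA)\ :\ H\cap C=\bm 0\ \text{and}\ C\subseteq\{f\ge 0\}\big\}.
\]

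It remains to remove the one-sidedness. By the constant-sign remark above, every chamber $C$ with $H\cap C=\bm 0$ lies entirely in one of the closed half-spaces $\{f\ge 0\}$ or $\{f\le 0\}$, so $\{C:H\cap C=\bm 0\}$ is the disjoint union of its ``positive-side'' and ``negative-side'' parts. Since $\AAA$ is central, the antipodal map $C\mapsto -C$ is an involution of $\CCC(\AAA)$ which preserves the condition $H\cap C=\bm 0$ and interchanges the two sides, hence restricts to a bijection between the two parts. Therefore $\#\{C:H\cap C=\bm 0\}=2\,\beta(\AAA)$, which is the assertion.

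The step I expect to require the most care is the combinatorial identification of the bounded chambers of $\AAA\cup\{H'\}$: one must argue cleanly that the piece $C\cap\{f\le 1\}$ fails to be bounded precisely when $C$ contains a ray on which $f$ is nonpositive, i.e. precisely when $C$ is not strictly on the positive side of $H$. The remaining ingredients — the constant-sign/connectedness observation and the antipodal bijection — are elementary.
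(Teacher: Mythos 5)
Your proposal is correct and follows essentially the same route as the paper: apply Proposition~\ref{Prop: beta interpretation} to the translate $\{f=1\}$, identify the bounded chambers of $\AAA\cup\{H'\}$ as the truncations $C\cap\{f\le 1\}$ of exactly those chambers strictly on the positive side of $H$ (the paper invokes similar triangles where you use a compactness/rescaling argument, but the content is the same), and then double the count using central symmetry. No gaps.
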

\begin{proof}
With $V$ being the ambient space of the arrangement, we start by choosing a linear form $\alpha_H\in V^*$ such that $\alpha_H(v)=0$ for all points $v\in H$. Because $\AAA$ is central, the hyperplane $H$ divides its collection of chambers into two disjoint sets $\CCC_{H^+}(\AAA)$ and $\CCC_{H^-}(\AAA)$ that consist of the chambers $C$ whose points $v\in C$ satisfy $\alpha_H(v)\geq 0$ or $\alpha_H(v)\leq 0$ respectively.

Now, if we define $H'$ to be the translate of $H$ cut by the equation $\alpha_H(v)=1$, the chambers of the arrangement $\AAA\cup H'$ are divided in two sets as follows. First, we have the chambers in $\CCC_{H^-}(\AAA)$ as they are, but then, for each chamber $K\in\CCC_{H^+}(\AAA)$ there exist two chambers $K_1,K_2$ in $\CCC(\AAA\cup H')$; the points $v$ in $K_1$ satisfy $0\leq \alpha_H(v)\leq 1$, while the points $v\in K_2$ give $1\leq \alpha_H(v)$. Now, the chamber $K_2$ is always unbounded, but the chamber $K_1$ is bounded if and only if it has a finite intersection with the hyperplane $H'$. If the intersection $K_1\cap H'$ is finite, then since $\AAA$ is central and $H'$ is a (parallel) translate of the hyperplane $H\ni\bm 0$, we must have $H\cap K=\bm 0$ (for instance by Thales' theorem on similar triangles). If the intersection is infinite it means that $K_1$ and thus $K$ contain an infinite ray parallel to $H$, and since $K$ is a cone this means it must contain an infinite ray \emph{inside} $H$ as well; that is $K\cap H\neq \bm 0$.

This means that the set of bounded chambers in $\AAA\cup H'$ is in bijection with the set of chambers $K\in \CCC_{H+}(\AAA)$ such that $K\cap H=\bm 0$. Because $\AAA$ is central, it is invariant under scalar multiplication by $-1$, and then the same statement must be true for the arrangement $\AAA\cup H''$, with $H''$ cut by $\alpha_H(v)=-1$, and the set $\CCC_{H^-}(\AAA)$; moreover, the two cardinalities $b(\AAA\cup H')$ and $b(\AAA\cup H'')$ must be equal. Combining this with Proposition~\ref{Prop: beta interpretation}
 we have that the quantity $2\cdot \beta(\AAA)$ is equal to the total number of chambers $C\in\CCC(\AAA)$ such that $C\cap H=\bm 0$. This completes the proof.
\end{proof}

\begin{remark}
The interpretation of the beta invariant given in Corollary~\ref{cor:beta=half_chambers} above is used in \cite[\S8.4]{aguiar_book} as the definition of $\beta(\AAA)$. In this setting they relate it to the determinant of the Varchenko matrix of $\AAA$ \cite[Thm.~8.11]{aguiar_book}. The connection to Crapo's definition is briefly discussed at the end of Section~8 in \cite{aguiar_book}.
\end{remark}

Below, we give the key technical lemma of this paper that we will rely upon in Section~\ref{sec:Coxeter_appl} for the proof of our main Theorem~\ref{thm:main}.

\begin{lemma}[Double counting lemma]\label{lem: double_counting}\ \newline
Let $\AAA$ be a real, central hyperplane arrangement with set of chambers $\mathcal{C}(\AAA)$, and let $\PPP\subseteq\AAA$ be an arbitrary subset of hyperplanes. If $\far_{\AAA,\PPP}(C)$ denotes the set of hyperplanes in $\PPP$ that are \emph{faraway planes} for some chamber $C\in\mathcal{C}(\AAA)$, we will have that
\[
\sum_{C\in\mathcal{C}(\AAA)}\Big|\far_{\AAA,\PPP}(C)\Big|=2\cdot|\PPP|\cdot\beta(\AAA),
\]
where $|\PPP|$ and $\beta(\AAA)$ denote the size of $\PPP$ and the beta invariant of $\AAA$ respectively.
\end{lemma}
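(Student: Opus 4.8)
The statement is a clean double-counting identity, so the plan is to count the set of incidence pairs
\[
\mathfrak{P}_{\PPP}(\AAA) := \Big\{ (H,C) \in \PPP \times \CCC(\AAA)\ :\ H \cap C = \bm 0 \Big\}
\]
in two ways. Summing over chambers first gives exactly the left-hand side $\sum_{C}|\far_{\AAA,\PPP}(C)|$ by the definition of $\far_{\AAA,\PPP}(C)$. The work is therefore to show that summing over hyperplanes first yields $2\cdot|\PPP|\cdot\beta(\AAA)$, and for this I would invoke Corollary~\ref{cor:beta=half_chambers}: for each fixed hyperplane $H\in\AAA$, the number of chambers $C\in\CCC(\AAA)$ with $H\cap C=\bm 0$ is exactly $2\beta(\AAA)$. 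Crucially this count is the \emph{same} for every $H$ (the corollary applies verbatim to any hyperplane of $\AAA$), so restricting $H$ to range over the subset $\PPP$ contributes $|\PPP|$ identical terms, each equal to $2\beta(\AAA)$. Assembling the two counts of $\mathfrak{P}_{\PPP}(\AAA)$ gives the claimed equality.

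One technical point to address is that the lemma is stated for a central arrangement $\AAA$ that is \emph{not} assumed essential (unlike the hypotheses of Corollary~\ref{cor:beta=half_chambers}). The fix is routine: pass to the essentialization $\AAA^{\mathrm{ess}}$, obtained by quotienting $V$ by the center (the common intersection $\bigcap_{H\in\AAA} H$) of $\AAA$. Essentialization does not change the intersection lattice $\LLL_{\AAA}$, hence not the characteristic polynomial nor $\beta(\AAA)$; it puts the chambers of $\AAA$ and $\AAA^{\mathrm{ess}}$ in bijection; and a chamber $C$ of $\AAA$ meets $H\in\AAA$ only in the center of $\AAA$ precisely when the corresponding chamber of $\AAA^{\mathrm{ess}}$ meets the image of $H$ only at $\bm 0$. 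Strictly speaking the notation ``$H\cap C=\bm 0$'' in Definition~\ref{defn: far planes} already presumes essentiality, so I would either silently assume $\AAA$ is essential here (matching the rest of \S\ref{ssec:definitions_far_nfar}) or spell out this reduction in a sentence. Either way it is bookkeeping, not a real obstacle.

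I do not anticipate a genuine hard step: the entire content has been pushed into Corollary~\ref{cor:beta=half_chambers}, and what remains is to observe that the $2\beta(\AAA)$ count is hyperplane-independent and then run the Fubini-style interchange of summation order. If anything merits a careful sentence, it is exactly this independence claim — making explicit that Corollary~\ref{cor:beta=half_chambers} was stated for an \emph{arbitrary} $H\in\AAA$, so that the restriction to $H\in\PPP\subseteq\AAA$ introduces no complication and simply scales the total by $|\PPP|$. The proof should fit in a short paragraph.
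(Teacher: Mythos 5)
Your proposal is correct and is essentially identical to the paper's own proof: both count the incidence set $\{(H,C)\in\PPP\times\CCC(\AAA) : H\cap C=\bm 0\}$ two ways, using Corollary~\ref{cor:beta=half_chambers} for the hyperplane-first count and Definition~\ref{defn: far planes} for the chamber-first count. Your remark about essentialization addresses a hypothesis mismatch the paper leaves implicit, but it does not change the argument.
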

\begin{proof}
The proof is the same as our discussion in \S\ref{sec:intro:proof}. The two sides of the equation reflect two different ways to enumerate the elements of the set 
\[
\mathfrak{P}(\PPP,\AAA):=\Big\{ (H,C)\in\PPP\times\CCC(\AAA)\ \ \text{such that}\ \ H\cap C=\bm 0\Big\}.
\]
Counting pairs $(H,C)\in\mathfrak{P}(\PPP,\AAA)$ starting with the hyperplane $H$ and then finding all chambers $C$ such that $H\cap C=\bm 0$, we get a total of $2\cdot|\PPP|\cdot\beta(\AAA)$ elements, after Corollary~\ref{cor:beta=half_chambers}. On the other hand, enumerating the pairs $(H,C)\in\mathfrak{P}(\PPP,\AAA)$ starting with the chamber $C$ and counting the hyperplanes $H\in\PPP$ for which $H\cap C=\bm 0$ gives us, after Definition~\ref{defn: far planes}, precisely $\sum_{C\in\CCC(\AAA)}\big|\far_{\AAA,\PPP}(C)\big|$. This completes the proof.
\end{proof}

\subsection{General numerological applications}\label{ssec:free_arrts}
The material presented in this section will be mostly applied --in this paper-- in the context of reflection arrangements and their restrictions. We chose to discuss it in higher generality (arbitrary central, or simplicial central arrangements) because we hope and expect that it will have further applications. We hint at a few possibilities here.

There is a large family of central hyperplane arrangements $\AAA$, called \defn{free arrangements} \cite{yoshi_free}, for which the characteristic polynomial $\chi(\AAA,t)$ factors into linear terms with positive integer roots. They are defined by a condition on the ring $\mathcal{D}(\AAA)$ of (polynomial) vector fields tangent to the hyperplanes of $\AAA$; if the ring is a \emph{free} module over the ambient algebra $\mathbb{C}[V]$, then we say that $\AAA$ is free. Terao \cite{terao_exps} proved the factorization property discussed above by relating the roots of $\chi(\AAA,t)$ to the homogeneous degrees of a basis for the module $\mathcal{D}(\AAA)$. For a free arrangement $\AAA$ of rank $n$, the roots $\varepsilon_1,\ldots,\varepsilon_n$ of $\chi(\AAA,t)$ are called the \defn{exponents} of $\AAA$. Because $\AAA$ is central, we may assume that $\varepsilon_1=1$ and then immediately by Defn.~\ref{defn:beta(A)} we will have the following statement.

\begin{proposition}\label{prop:beta(A)_A_free}
For a free arrangement $\AAA$ of rank $n$, with exponents $\varepsilon_1=1,\varepsilon_2,\ldots,\varepsilon_n$, the beta invariant $\beta(\AAA)$ is given by 
\[
\beta(\AAA)=\prod_{i=2}^n(\varepsilon_i-1).
\]
\end{proposition}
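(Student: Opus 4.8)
The plan is to unwind the two definitions and observe that nothing more than a one-line computation is needed. First I would recall from the statement of the proposition that $\AAA$ is free of rank $n$, so by Terao's factorization theorem (cited in the paragraph preceding the proposition) the characteristic polynomial splits completely as
\[
\chi(\AAA,t)=\prod_{i=1}^n(t-\varepsilon_i),
\]
with $\varepsilon_1=1$ because $\AAA$ is central (the origin is always a flat, forcing a root at $t=1$).

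Next I would differentiate this product. Using the logarithmic-derivative form, or just the product rule, one gets
\[
\frac{\mathrm d\chi(\AAA,t)}{\mathrm dt}=\sum_{j=1}^n\ \prod_{i\neq j}(t-\varepsilon_i).
\]
Now I would evaluate at $t=1$. Every summand with $j\neq 1$ contains the factor $(t-\varepsilon_1)=(t-1)$, which vanishes at $t=1$; hence only the $j=1$ term survives, giving
\[
\frac{\mathrm d\chi(\AAA,t)}{\mathrm dt}\Big|_{t=1}=\prod_{i=2}^n(1-\varepsilon_i)=(-1)^{n-1}\prod_{i=2}^n(\varepsilon_i-1).
\]
Plugging this into Definition~\ref{defn:beta(A)}, the sign $(-1)^{\op{rank}(\AAA)-1}=(-1)^{n-1}$ cancels the sign just produced, and we are left with $\beta(\AAA)=\prod_{i=2}^n(\varepsilon_i-1)$, as claimed.

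I do not expect any genuine obstacle here: the content is entirely in Terao's theorem, which we are allowed to assume, and the rest is the elementary fact that differentiating a product of linear factors and evaluating at one of the roots kills all but one term. The only point worth a sentence of care is the justification that $\varepsilon_1$ may be taken equal to $1$ (equivalently, that $t-1\mid\chi(\AAA,t)$), which follows because for a central arrangement the ambient space and the origin both lie in $\LLL_\AAA$ and $\chi(\AAA,1)=0$; this is exactly the remark made in the text just before the proposition, so I would simply cite it rather than reprove it.
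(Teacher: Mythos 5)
Your proof is correct and is exactly the computation the paper has in mind: the paper gives no separate argument, stating only that the formula follows ``immediately'' from Definition~\ref{defn:beta(A)} once Terao's factorization and $\varepsilon_1=1$ are granted, and your differentiate-and-evaluate-at-$t=1$ calculation (with the sign $(-1)^{n-1}$ cancelling) is precisely that immediate verification.
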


Because of Proposition~\ref{prop:beta(A)_A_free} many enumerative questions regarding the sets defined in \S\ref{ssec:definitions_far_nfar} will have answers given in terms of product formulas when $\AAA$ is a free arrangement. We give the following as an example.

\begin{corollary}\label{cor:expected_far_planes:free_arr}
For a free hyperplane arrangement $\AAA$ of rank $n$ with exponents $\varepsilon_1=1,\varepsilon_2,\ldots,\varepsilon_n$, the \emph{average} number of faraway planes for a chamber is given by the formula
\[
\op{Exp}_{C\in\CCC(\AAA)}\left(\Big|\far_{\AAA}(C)\Big|\right)=\big|\AAA\big|\cdot \prod_{i=2}^n\dfrac{\varepsilon_i-1}{\varepsilon_i+1}.
\]
\end{corollary}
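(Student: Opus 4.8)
The plan is to combine the Double counting lemma (Lemma~\ref{lem: double_counting}) applied with $\PPP=\AAA$ with the formula for $\beta(\AAA)$ from Proposition~\ref{prop:beta(A)_A_free}, and then divide by the total number of chambers. First I would recall that the expected value in question is, by definition,
\[
\op{Exp}_{C\in\CCC(\AAA)}\left(\Big|\far_{\AAA}(C)\Big|\right)=\dfrac{1}{|\CCC(\AAA)|}\sum_{C\in\CCC(\AAA)}\Big|\far_{\AAA}(C)\Big|,
\]
so the numerator is handled immediately by Lemma~\ref{lem: double_counting} (with $\PPP=\AAA$, so $|\PPP|=|\AAA|$), giving $2\cdot|\AAA|\cdot\beta(\AAA)$ on top.

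The remaining ingredient is a formula for $|\CCC(\AAA)|$ in terms of the exponents. Here I would invoke Zaslavsky's theorem: for a real central arrangement $\AAA$ the number of chambers is $(-1)^{\op{rank}(\AAA)}\chi(\AAA,-1)$. Since $\AAA$ is free with $\chi(\AAA,t)=\prod_{i=1}^n(t-\varepsilon_i)$, this evaluates to $\prod_{i=1}^n(1+\varepsilon_i)$. Likewise $\beta(\AAA)=\prod_{i=2}^n(\varepsilon_i-1)$ by Proposition~\ref{prop:beta(A)_A_free}. Substituting both and using $\varepsilon_1=1$ (so the $i=1$ factor $1+\varepsilon_1=2$ cancels against the leading $2$ in the numerator) leaves
\[
\dfrac{2\cdot|\AAA|\cdot\prod_{i=2}^n(\varepsilon_i-1)}{\prod_{i=1}^n(1+\varepsilon_i)}=|\AAA|\cdot\prod_{i=2}^n\dfrac{\varepsilon_i-1}{\varepsilon_i+1},
\]
which is exactly the claimed formula.

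There is essentially no obstacle here: the only substantive point is recalling Zaslavsky's chamber-counting formula, which is standard (it is in the references \cite{OT_book,stanley_book} cited at the start of the section) and is consistent with the way the bounded-chamber count was already quoted via $b(\AAA)=(-1)^{\op{rank}(\AAA)}\chi(\AAA,1)$. One should be slightly careful that the paper states $\far_{\AAA}(C)=\far_{\AAA,\AAA}(C)$ so that Lemma~\ref{lem: double_counting} does apply with $\PPP=\AAA$, and that the cancellation of the factor $2=1+\varepsilon_1$ is precisely what makes the product in the answer start at $i=2$ rather than $i=1$. So the proof is a short two-line computation once Zaslavsky's formula is in hand.
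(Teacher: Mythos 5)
Your proof is correct and follows exactly the paper's own argument: apply Lemma~\ref{lem: double_counting} with $\PPP=\AAA$, use Zaslavsky's theorem for $|\CCC(\AAA)|=\prod_{i=1}^n(\varepsilon_i+1)$ together with Proposition~\ref{prop:beta(A)_A_free}, and cancel the factor $2$ against $\varepsilon_1+1=2$. Nothing to add.
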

\begin{proof}
This is an immediate corollary of Zaslavsky's theorem that $\big|\CCC(\AAA)\big|=\prod_{i=1}^n(\varepsilon_i+1)$ and Lemma~\ref{lem: double_counting} for $\PPP=\AAA$. Notice that the factor $2$ of the lemma canceled with the factor $e_1+1=2$.
\end{proof}

\begin{remark}
Many popular classes of arrangements are free; for instance all chordal graphical arrangements --more generally all supersolvable arrangements \cite[Thm.~4.58]{OT_book}-- all reflection arrangements and all restricted reflection arrangements (see the discussion below \eqref{eq:defn:OS_exps}) are free.
\end{remark}

\section{Applications in finite Coxeter groups}
\label{sec:Coxeter_appl}

In this section we prove our main result (Thm.~\ref{thm:main}) by combining  the double counting lemma of the previous section (Lemma~\ref{lem: double_counting}) with a new geometric interpretation of the notion of \emph{support} in finite Coxeter groups (see Lemma~\ref{lem:geom_interpr_support}). In what follows, the presentation assumes some familiarity with the theory of finite Coxeter groups; for any terminology or statements that are not explained, the reader may consult the standard references \cite{Humphreys,OT_book,Kane}.

\subsection{Finite Coxeter groups and their parabolic subgroups}\label{ssec:coxeter_stuff}
We start again by fixing some notation and terminology. We will write $W$ to denote an irreducible, finite Coxeter group; $S$ for its set of simple generators, and $n:=|S|$ for its rank. For any such $W$, there exists a natural real vector space $V\cong\mathbb{R}^n$ on which $W$ is acting via linear orthogonal transformations. In this way, the collection of finite Coxeter groups $W$ agrees precisely with the collection of finite real reflection groups (namely, the finite subgroups of $\op{GL}(V)$ generated by Euclidean reflections). We will denote by \defn{$\AAA_W$} the \defn{reflection arrangement} of $W$; it is a real, essential, central, simplicial hyperplane arrangement that consists of the fixed hyperplanes of the reflections of $W$ and whose ambient space is $V$. 

We will denote by \defn{$C_0$} the \defn{fundamental chamber} of $W$; namely, the chamber of $\AAA_W$ whose boundary hyperplanes are exactly the fixed hyperplanes of the simple generators $s\in S$.
Any subset $J\subset S$ determines a face \defn{$C_0^J$} of the fundamental chamber as the intersection $C_0\cap\bigcap_{s\in J}V^s$. The subgroups $\langle J\rangle\leq W$ are precisely the pointwise stabilizers of the faces $C_0^J$ and are called \defn{standard parabolic subgroups}. More generally the pointwise stabilizer $W_B$ of \emph{any} collection of points $B\subseteq V$ will be called a \defn{parabolic subgroup}. It turns out that the fixed spaces of parabolic subgroups are always flats of the reflection arrangement $\AAA_W$ (with each flat $X\in\LLL_{\AAA_W}$ determining a distinct subgroup $W_X$) and that the parabolic subgroups $W_X$ are themselves reflection groups, with $\op{rank}(W_X)=\op{codim}(X)$, which are always conjugate to some standard parabolic subgroup (and it is possible that different standard parabolic subgroups are conjugate to each other).

The group $W$ acts via conjugation on the collection of parabolic subgroups $W_X\leq W$ and this action is equivariant to its natural action by multiplication on the flats $X\in\LLL_{\AAA_W}$. The orbits of these actions are called \defn{parabolic types} (generalizing the \emph{cycle type} of elements or Young subgroups in the symmetric group $S_n$) and will be denoted by $[X]\in\LLL_{\AAA_W}/W$ (for any representative flat $X$). The isomorphism type of a parabolic subgroup $W_X\leq W$ as a finite Coxeter group usually determines its parabolic type as well, but not always. For example, in the hyperoctahedral groups $B_n, n\geq 2$, any parabolic subgroup $W_H$ that fixes a hyperplane $H\in\LLL_{\AAA_{B_n}}$ is isomorphic to $A_1\cong S_2$ as a Coxeter group, but there are always two distinct parabolic types $[H_1]$ and $[H_2]$ that it could belong to (corresponding to the two orbits of hyperplanes, associated to the long and short roots respectively).

The collection $\left( \langle J\rangle \right)_{J\subseteq S}$ of standard parabolic subgroups of a finite Coxeter group $W$ forms a boolean lattice under inclusion. Therefore, for any parabolic subgroup $W_X\leq W$ (in fact any subgroup) there exist unique subsets $I$ and $J$ of $S$ for which
\begin{equation}
\langle I\rangle \leq W_X\leq \langle J\rangle,\label{eq:defn:I_and_J}
\end{equation}
and such that $I$ is the maximum such subset and $J$ the minimum one. That is, $\langle I\rangle$ is the largest standard parabolic subgroup \emph{contained} in $W_X$ and $\langle J\rangle$ is the smallest standard parabolic subgroup \emph{containing} $W_X$. The subset $J$ as above is known \cite[\S2.1.1]{verges_biane} as the \defn{support} of $W_X$ and if $J=S$ we say that $W_X$ is \defn{full support}. To facilitate discussing these objects, we will call $I$ the \defn{core} of $W_X$.

\begin{definition}\label{defn:sets:g(I)y}
In a finite Coxeter group $W$ with set of simple generators $S$ and for a subset $I\subseteq S$, we define $\defn{\nfw(I)}$ to be the collection of parabolic subgroups that are full support and whose core is $I$. More generally, if $[X]\in\LLL_{\AAA_W}/W$ is an orbit of flats, we write $\defn{\nfw\big([X]\big)}$ for the collection of parabolic subgroups of full support whose core $I$ determines a subgroup $\langle I \rangle$ of parabolic type $[X]$.

\noindent Furthermore, if $[Y]\in\LLL_{\AAA_W}/W$ is another orbit of flats, we will write $\defn{\nfw(I)_{[Y]}}$ and $\defn{\nfw\big([X]\big)_{[Y]}}$ for the subsets of $\nfw(I)$ and $\nfw\big([X]\big)$ respectively, consisting of those subgroups that have parabolic type $[Y]$. 
\end{definition}

A natural problem is to compute the cardinalities of the sets in Defn.~\ref{defn:sets:g(I)y} above. It turns out that for a particular subfamily, this is achievable by using the techniques of Section~\ref{sec:double_counting}. In our main Theorem~\ref{thm:main} we will compute the cardinalities of the sets $\nfw\big([X]\big)_{[Y]}$ for pairs of parabolic types $\big([X],[Y]\big)$ that satisfy $\dim(X)=\dim(Y)+1$ (for any representatives $X\in [X]$ and $Y\in [Y]$). 

There is a more conceptual way to describe the family discussed in the previous paragraph. Following Taylor \cite[Defn.~1.1]{taylor} we will say for two reflection subgroups $H\leq K\leq W$ of $W$, that $K$ is a \defn{simple extension} of $H$ if $K=\langle H, r\rangle$ for some (\emph{not necessarily simple}) reflection $r$ of $W$ such that $r\not\in H$. Then, given two parabolic subgroups $W_X\leq W_Y$ of $W$, we have that $\dim(X)=\dim(Y)+1$ if and only if $W_Y$ is a simple extension of $W_X$ (this can be seen for instance by simultaneously conjugating $W_X$ and $W_Y$ to standard parabolic subgroups, as in \cite[Prop.~2.4]{verges}).

This motivates the following definition and distinct notation, for those parabolic subgroups of full support that are simple extensions of their cores.

\begin{definition}\label{defn:nfwse}
In a finite Coxeter group $W$ with set of simple generators $S$ and for a subset $I\subseteq S$, we define $\defn{\nfwse(I)}$ to be the collection of parabolic subgroups of $W$ that have core $I$, are of full support, and which are \emph{simple extensions} of $\langle I\rangle$. More generally, if $[X]\in\LLL_{\AAA_W}/W$ is an orbit of flats, we write $\defn{\nfwse\big([X]\big)}$ for the collection of parabolic subgroups of full support, whose core $I$ determines a subgroup $\langle I \rangle$ of parabolic type $[X]$, and which are  simple extensions of $\langle I\rangle$.
\end{definition}

\subsection{Orlik-Solomon exponents and the matrix $U$}
\label{ssec:OS-exps}

Much of the combinatorics of finite Coxeter groups $W$ is encoded in the reflection arrangement $\AAA_W$, the restricted arrangements $\AAA_W^X$ (for any flat $X\in\LLL_{\AAA_W}$) and their invariants. Orlik and Solomon \cite[Thm.~1.4]{OS_Cox_Arr} have shown that the characteristic polynomials of the arrangements $\AAA_W^X$ always factor as 
\begin{equation}
\chi(\AAA_W^X,t)=\prod_{i=1}^{\dim(X)}(t-b_i^X),\label{eq:defn:OS_exps}
\end{equation}
for \emph{positive, integer} numbers $b_i^X$. These numbers are known as the \defn{Orlik-Solomon exponents} for the parabolic type $[X]$; we write them in increasing order $b_i^X\leq b_{i+1}^X$ and we always have $b_1^X=1$ since $\AAA_W^X$ is central. In the case $X=V$, we get the whole arrangement $\AAA_W$ and the Orlik-Solomon exponents $b_i^V$ are the \emph{exponents of $W$} as discussed in the introduction. This factorization property comes down to the fact\footnote{Proven case-by-case for all real reflection groups in \cite{OT_A^X_is_free} and uniformly for Weyl groups in \cite{Douglass_A^X_free}.} that all restricted arrangements $\AAA_W^X$ are free (see \S\ref{ssec:free_arrts}). Combining \eqref{eq:defn:OS_exps} with Defn.~\ref{defn:beta(A)}, the beta invariants of the restricted arrangements are given by
\begin{equation}
\beta(\AAA_W^X)=\prod_{i=2}^{\dim(X)}(b_i^X-1).\label{eq:beta(A^X)}
\end{equation}

As an intermediate step in computing the characteristic polynomials $\chi(\AAA_W^X,t)$, Orlik and Solomon \cite[\S2]{OS_Cox_Arr} consider a square matrix $U$ whose rows and columns are indexed by parabolic types. Its entries are important for our Theorem~\ref{thm:main} so we recall the definition here.

\begin{definition}\label{defn:OS:U}
Let $W$ be an irreducible, finite Coxeter group, and  $[X],[Y]\in\LLL_{\AAA_W}/W$ two of its parabolic types. We define $u^{}_{[X],[Y]}$ to be the number of flats in the restricted arrangement $\AAA_W^X$, for any representative $X\in [X]$, that are of parabolic type $[Y]$. That is,
\[
\defn{u^{}_{[X],[Y]}}:=\#\Bigg\{ {Z\in [Y]\text{ such that }Z\subseteq X \atop \text{for a fixed } X\in[X]} \Bigg\}.
\]
We arrange these numbers in a square matrix, whose rows and columns are identically indexed by the parabolic types of $W$ in non-increasing dimension, and we call it the \defn{Orlik-Solomon matrix $U$}. For each irreducible Coxeter group, the Orlik-Solomon matrices are given in \cite[Appendix~C]{OT_book}.
\end{definition}

\subsection{Proof of the main theorem}
\label{ssec:main_thm}
We start by a few propositions that relate the Coxeter-theoretic objects of \S\ref{ssec:coxeter_stuff} with the combinatorial objects of \S\ref{ssec:definitions_far_nfar}.

\begin{lemma}[Geometric interpretation of support and core]\label{lem:geom_interpr_support}
Let $W$ be a finite Coxeter group with set of simple generators $S$, reflection arrangement $\AAA$, and fundamental chamber $C_0$. For any flat $X\in\LLL_{\AAA}$, let $I(X)$ and $J(X)$ denote respectively the core and support of the parabolic subgroup $W_X$. Then the following hold.
\begin{enumerate}[(i)]
    \item $C_0^{I(X)}$ is the \emph{minimal} face of $C_0$ whose span contains $X$, and
    \item $C_0^{J(X)}$ is the \emph{maximal} face of $C_0$ that is contained in $X$.
\end{enumerate}
\end{lemma}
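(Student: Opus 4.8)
The plan is to unwind the definitions of core and support in terms of the boolean lattice of standard parabolic subgroups and match them against the face structure of $C_0$. Recall that the faces of $C_0$ are exactly the cones $C_0^J = C_0 \cap \bigcap_{s\in J} V^s$ for $J \subseteq S$, that $\op{span}(C_0^J) = \bigcap_{s\in J}V^s = V^{\langle J\rangle}$, and that the inclusion of faces is order-reversing in $J$: a larger $J$ gives a smaller face and a smaller flat $V^{\langle J\rangle}$. So proving (i) amounts to showing that, among all $J\subseteq S$ with $\op{span}(C_0^J)\supseteq X$, the core $I(X)$ is the largest (giving the minimal face), and proving (ii) amounts to showing that, among all $J\subseteq S$ with $C_0^J\subseteq X$, the support $J(X)$ is the largest (giving the maximal such face, since here larger $J$ still means smaller face but we want the maximal face contained in $X$, so I must be careful about which direction is extremal — I will double-check the ordering when I write it out).

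For part (ii): the condition $C_0^J \subseteq X$. Since $C_0^J$ is a full-dimensional cone inside the flat $V^{\langle J\rangle}$, the containment $C_0^J \subseteq X$ holds if and only if $V^{\langle J \rangle} \subseteq X$, which (taking pointwise stabilizers, using that $W_{V^{\langle J\rangle}} = \langle J\rangle$ and that $U\subseteq X \iff W_X \leq W_U$ for flats) is equivalent to $W_X \leq \langle J\rangle$. By the definition of support, $J(X)$ is the minimum such $J$; but minimum $J$ corresponds to the \emph{largest} flat $V^{\langle J\rangle}$ contained in $X$, hence the largest face $C_0^{J(X)}$ — wait, minimum $J$ gives the largest face, so $C_0^{J(X)}$ is indeed the maximal face of $C_0$ contained in $X$. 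Good. For part (i): the condition $\op{span}(C_0^J) = V^{\langle J\rangle} \supseteq X$ is equivalent (again passing to pointwise stabilizers) to $\langle J\rangle \leq W_X$, i.e. $\langle J\rangle$ is a standard parabolic contained in $W_X$. By definition the core $I(X)$ is the maximum such $J$, and maximum $J$ corresponds to the \emph{smallest} face $C_0^{I(X)}$, which is the minimal face whose span contains $X$. This gives (i).

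Concretely I would structure the proof as: (1) state the two dictionary facts — faces of $C_0$ are the $C_0^J$, with $\op{span}(C_0^J)=V^{\langle J\rangle}$ and $J_1\subseteq J_2 \iff C_0^{J_2}\subseteq C_0^{J_1} \iff V^{\langle J_2\rangle}\subseteq V^{\langle J_1\rangle}$; (2) the Galois-type equivalence $V^{\langle J\rangle}\subseteq X \iff W_X \leq \langle J\rangle$ and $V^{\langle J\rangle}\supseteq X \iff \langle J\rangle \leq W_X$, both following from the order-reversing bijection between flats of $\AAA_W$ and parabolic subgroups; (3) observe $C_0^J \subseteq X \iff \op{span}(C_0^J)\subseteq X$ since $C_0^J$ is open in its span (it is a full-dimensional cone in $V^{\langle J\rangle}$) and $X$ is a linear subspace; and (4) combine with the extremality in the definitions \eqref{eq:defn:I_and_J} of $I(X)$ and $J(X)$ and the order-reversal to land on "minimal face" and "maximal face" respectively. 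The main obstacle — really the only subtle point — is step (3), the claim that a face of $C_0$ is contained in the flat $X$ if and only if its linear span is; this needs the fact that $C_0^J$ spans $V^{\langle J\rangle}$ (equivalently that $C_0$ is a simplicial, full-dimensional chamber so each of its faces is full-dimensional in the corresponding flat), which is standard for reflection arrangements but worth stating explicitly. Everything else is bookkeeping with the order-reversing correspondences.
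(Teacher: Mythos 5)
Your proof is correct and follows essentially the same route as the paper: both reduce the claim to the Galois-type equivalences $\langle J\rangle \leq W_X \iff V^{\langle J\rangle}\supseteq X$ and $W_X\leq \langle J\rangle \iff V^{\langle J\rangle}\subseteq X \iff C_0^J\subseteq X$, and then invoke the extremality in the definitions of core and support together with the order-reversal between subsets of $S$ and faces of $C_0$. Your explicit justification of the step $C_0^J\subseteq X \iff \spn(C_0^J)\subseteq X$ (via $C_0^J$ being full-dimensional in $V^{\langle J\rangle}$) is a detail the paper passes over silently, but it is the same argument.
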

\begin{proof}
For the first implication $(i)$, notice that for any subset $I\subseteq S$, we have that $\langle I\rangle\leq W_X$ if and only if $\spn(C_0^I)\supseteq X$. By its definition, the core $I(X)$ is the unique \emph{maximal} standard parabolic subgroup contained in $W_X$, so that dually $C_0^{I(X)}$ must be the minimal face whose span contains $X$.

For the second implication $(ii)$ start with any subset $J\subseteq S$; we have that $W_X\leq \langle J\rangle$ if and only if $\bigcap_{s\in J} V^s\subseteq X$ if and only if $C_0^J\subseteq X$. This implies (recall the definition of support below equation \eqref{eq:defn:I_and_J}) that the support of $W_X$ corresponds to the largest face of $C_0$ that is contained in $X$.
\end{proof}

The following corollary states that parabolic subgroups of full support correspond to faraway flats for the fundamental chamber (see the paragraph below Defn.~\ref{defn: far planes}).

\begin{corollary}\label{cor:suppor_interp}
Let $W$ be a finite Coxeter group with reflection arrangement $\AAA$ and fundamental chamber $C_0$. For any flat $X\in\LLL_{\AAA}$, the parabolic subgroup $W_X$ is full support if and only if $X$ is a faraway flat for $C_0$.
\end{corollary}
\begin{proof}
We need to show that $W_X$ is full support if and only if $X\cap C_0=\bm 0$. This is immediate after Lemma~\ref{lem:geom_interpr_support}:$(ii)$ since the maximal face of $C_0$ that is contained in $X$ is equal to $X\cap C_0$ and since $C_0^S=\bm 0$ (for the set of simple generators $S$ of $W$).
\end{proof}

In fact, we can do a bit more when the (full support) parabolic subgroups of $W$ are simple extensions of their cores. Recall from Defn.~\ref{defn:sets:g(I)y} that $\nfw(I)_{[Y]}$ consists of the parabolic subgroups of type $[Y]$ that are full support and have core $I$. If $\op{rank}(W_Y)=|I|+1$, then all groups in $\nfw(I)_{[Y]}$ are by construction simple extensions of their core (see the discussion preceding Defn.~\ref{defn:nfwse}).

\begin{corollary}\label{cor:g(I)_Y=nf_(A,Y)(C,C^I)}
Let $W$ be a finite Coxeter group with reflection arrangement $\AAA$, set of simple generators $S$ and associated fundamental chamber $C_0$. For any pair $(I,[Y])$ of a subset $I\subsetneq S$ and a parabolic type $[Y]\in\LLL_{\AAA}/W$ such that $|I|+1=\op{rank}(W_Y)$, the set $\nfw(I)_{[Y]}$ is in bijection with the collection of nearest faraway flats for $C_0$ that have associated face $C_0^I$ (recall Defn.~\ref{defn: associated_face}) and belong to the orbit $[Y]$. That is, we have 
\[
\nfw(I)_{[Y]}=\nfar_{\AAA,[Y]}(C_0,C_0^I).
\]
\end{corollary}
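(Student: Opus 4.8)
The plan is to establish the set equality $\nfw(I)_{[Y]}=\nfar_{\AAA,[Y]}(C_0,C_0^I)$ by a direct translation between parabolic subgroups and flats, using the correspondence $W_Z \leftrightarrow Z$ between parabolic subgroups and flats of $\AAA$ together with the geometric interpretations already established. First I would recall that $W_Z$ has type $[Y]$ precisely when $Z\in[Y]$, so that keeping track of the orbit $[Y]$ is automatic on both sides and the content is really about matching ``full support with core $I$'' to ``nearest faraway flat with associated face $C_0^I$''. So it suffices to show that for a flat $Z\in\LLL_{\AAA}$ with $\op{rank}(W_Z)=|I|+1$, the parabolic subgroup $W_Z$ is full support with core $I$ if and only if $Z$ is a nearest faraway flat for $C_0$ with associated face $C_0^I$.

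For the forward direction, suppose $W_Z$ is full support with core $I$. By Corollary~\ref{cor:suppor_interp}, full support gives $Z\cap C_0=\bm 0$, so $Z$ is a faraway flat. By Lemma~\ref{lem:geom_interpr_support}$(i)$, since the core of $W_Z$ is $I$, the face $C_0^{I}$ is the minimal face of $C_0$ whose span contains $Z$; and $\dim(C_0^I)=n-|I|$ while $\dim(Z)=\op{codim}(W_Z)=n-\op{rank}(W_Z)=n-|I|-1$, so $\dim(C_0^I)=\dim(Z)+1$. This is exactly the defining condition (Defn.~\ref{defn: nfar flats}) for $Z$ to be a nearest faraway flat, and since $\AAA=\AAA_W$ is simplicial, $C_0^I$ is \emph{the} associated face of $Z$ in the sense of Defn.~\ref{defn: associated_face}. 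Hence $Z\in\nfar_{\AAA,[Y]}(C_0,C_0^I)$. Conversely, if $Z\in\nfar_{\AAA,[Y]}(C_0,C_0^I)$, then $Z\cap C_0=\bm 0$ so $W_Z$ is full support by Corollary~\ref{cor:suppor_interp}, and the associated face being $C_0^I$ means $C_0^I$ is the minimal face whose span contains $Z$, which by Lemma~\ref{lem:geom_interpr_support}$(i)$ means the core of $W_Z$ is exactly $I$; thus $W_Z\in\nfw(I)_{[Y]}$.

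The only mild subtlety is bookkeeping around the rank/dimension hypothesis $|I|+1=\op{rank}(W_Y)$: one must check that the minimality of the face in Defn.~\ref{defn: nfar flats} matches the \emph{maximality} of the core subset in \eqref{eq:defn:I_and_J} under the order-reversing correspondence $J\mapsto C_0^J$ between subsets of $S$ and faces of $C_0$ (larger subsets give smaller-dimensional faces). This is precisely the content of Lemma~\ref{lem:geom_interpr_support}$(i)$, and the hypothesis guarantees that the core $I$, which a priori could have size at most $\op{rank}(W_Z)-1$ for a full-support $W_Z$, indeed has size exactly $\op{rank}(W_Z)-1$, matching the nearest (rather than merely faraway) condition. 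I do not anticipate a genuine obstacle here; the proof is essentially an unwinding of the definitions and of Lemma~\ref{lem:geom_interpr_support} and Corollary~\ref{cor:suppor_interp}, with the simpliciality of $\AAA_W$ ensuring the associated face is well defined.
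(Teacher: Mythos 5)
Your proof is correct and follows essentially the same route as the paper: both arguments combine Corollary~\ref{cor:suppor_interp} (full support $\Leftrightarrow$ faraway) with Lemma~\ref{lem:geom_interpr_support}$(i)$ (core $I$ $\Leftrightarrow$ $C_0^I$ is the minimal face whose span contains $Z$), and then use the hypothesis $|I|+1=\op{rank}(W_Y)$ to get $\dim(C_0^I)=\dim(Z)+1$, which is exactly the ``nearest'' condition with associated face $C_0^I$. Your write-up is merely more explicit in separating the two inclusions and the dimension bookkeeping.
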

\begin{proof}
A parabolic subgroup $W_Z$ belongs to the set $\nfw(I)_{[Y]}$ if and only if it is full support, has core $I$ and the flat $Z$ lies in the orbit $[Y]$. By Lemma~\ref{lem:geom_interpr_support} and Corollary~\ref{cor:suppor_interp} this is equivalent to requiring that $Z$ is in $[Y]$, it is faraway for the chamber $C_0$, and that $C_0^I$ is the minimal face $F$ of $C_0$ such that $\spn(F)\supseteq X$. Since moreover we have assumed that $|I|+1=\op{rank}(W_Z)$, we must have that $\dim\big(\spn(C_0^I)\big)=\dim(Z)+1$. This means that $Z$ is in fact a \emph{nearest} farawat flat for $C_0$ with associated face $C_0^I$ and the proof is complete.
\end{proof}

We are now ready to give the main Theorem and its proof.
\mainthm
\begin{proof}
We start by applying Corollary~\ref{cor:g(I)_Y=nf_(A,Y)(C,C^I)} and translating the enumeration of full support parabolic subgroups with respect to their cores, to the enumeration of nearest faraway flats with respect to the associated faces. We will have that
\[
\Big|\nfw\big([X]\big)_{[Y]}\Big|=\sum_{I\subseteq S\atop V^I\in [X]}\Big|\nfw(I)_{[Y]}\Big|=\sum_{I\subseteq S\atop V^I\in[X]}\Big|\nfar_{\AAA,[Y]}\left(C_0,C_0^I\right)\Big|,
\]
where the first equality is just Definition~\ref{defn:sets:g(I)y}. The following lines give the enumeration of the corresponding (nearest) faraway flats; immediately below, we elaborate each of the steps.
\begin{align*}
\sum_{I\subseteq S\atop V^I\in[X]}\Big|\nfar_{\AAA,[Y]}\left(C_0,C_0^I\right)\Big|&=\dfrac{1}{|W|}\cdot \sum_{C\in\CCC(\AAA)}\sum_{F\subseteq C\atop \spn(F)\in[X]} \Big|\nfar_{\AAA,[Y]}(C,F)\Big|\\
&=\dfrac{1}{|W|}\sum_{Z\in[X]}|W_Z|\cdot \sum_{K\in\CCC(\AAA^Z)}\Big|\far_{\AAA^Z,\AAA^Z\cap [Y]}(K)\Big|\\
&=\dfrac{1}{|W|}\sum_{Z\in [X]}|W_Z|\cdot\left( 2\cdot \Big|\AAA^Z\cap [Y]\Big|\cdot \beta(\AAA^Z)\right)\\
&=[W:N(X)]\cdot \dfrac{|W_X|}{|W|}\cdot 2\cdot u^{}_{[X],[Y]}\cdot\beta(\AAA^X)=\dfrac{2\cdot u^{}_{[X],[Y]}}{[N(X):W_X]}\cdot\beta(\AAA^X).
\end{align*}
The first equation holds because the action of $W$ respects the intersection patterns of orbits of flats on chambers (and there are $|W|$-many chambers in $\CCC(\AAA)$). The second equation is a reordering of the summation (each flat $Z\in [X]$ is adjacent to $|W_Z|$-many chambers of $\CCC(\AAA)$ while a chamber $K$ of the restricted arrangement $\AAA^Z$ is a face in each of those $|W_Z|$-many chambers) followed by an application of Proposition~\ref{prop:nfar-to-far}. The third equation is an application of the (double counting) Lemma~\ref{lem: double_counting} with $\PPP=\AAA^Z\cap [Y]$. For the fourth equation, we have that $\big|[X]\big|=[W:N(X)]$ by definition of the group $N(X)$ and we have that $\big|\AAA^Z\cap[Y]\big|=u^{}_{[X],[Y]}$ by the definition of the Orlik-Solomon matrix $U$ (see Defn.~\ref{defn:OS:U}).

Finally, the last equation is just a cancellation of the two $|W|$ factors and the statement of the theorem follows after \eqref{eq:beta(A^X)}.
\end{proof}

\begin{remark}\label{rem:contant_ratio}
Notice that in Theorem~\ref{thm:main} the quantity $\dfrac{\big|\nfw([X])_{[Y]}\big|}{u_{[X],[Y]}}$ does not depend on the parabolic type $[Y]$. In fact, if we denote by \defn{$\nu_{[X]}$} the number of standard parabolic subgroups of $W$ that are of parabolic type $[X]$, we will have that 
\[
\dfrac{\big|\nfw([X])_{[Y]}\big|}{u_{[X],[Y]}}=\dfrac{2}{[N(X):W_X]}\cdot\prod_{i=2}^{\dim(X)}(b_i^X-1)=\nu_{[X]}\cdot\prod_{i=2}^n\dfrac{b_i^X-1}{b_i^X+1},
\]
which relies on the formula $\nu_{[X]}\cdot [N(X):W_X]=\prod_{i=1}^{\dim(X)}(b_i^X+1)$ due to Orlik and Solomon (see \cite[(4.2)]{OS_Cox_Arr}). This is no longer true if $\dim(X)\neq \dim(Y)+1$, see \S\ref{sssec:extra:not_simple_ext}.
\end{remark}

At this point we will give a few corollaries of our main Theorem~\ref{thm:main}, recovering in particular the formulas of Thiel and Chapoton. We start with \cite[Thm.~1.2]{thiel} slightly rephrased to model the formulas of \cite[Prop.~6.6:(2)]{sommers}.

\begin{corollary}\label{cor:chap_form_krew}
Let $W$ be a finite, irreducible Coxeter group of rank $n\geq 2$ and let $t$ be a reflection of $W$ with fixed hyperplane $H:=V^t$. The number $\big(f_W\big)_{[H]}$ of reflections of full support that are conjugate to $t$ is given by
\[
\big(f_W\big)_{[H]}=\dfrac{\prod_{i=1}^{n-1}(h-1-e_i)}{[N(H):W_H]},
\]
where $h:=e_n+1$ is the Coxeter number of $W$ and $e_1,\ldots,e_n$ are the exponents of $W$, and where $N(H)$ and $W_H$ are respectively the setwise and pointwise stabilizers of $H$.
\end{corollary}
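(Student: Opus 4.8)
The plan is to read off Corollary~\ref{cor:chap_form_krew} from Theorem~\ref{thm:main} by taking $[X]=[V]$, the parabolic type of the trivial subgroup $\{e\}=W_V$ (whose fixed flat is all of $V$, of dimension $n$), and $[Y]=[H]$, the type of the reflecting hyperplane $H=V^t$ (of dimension $n-1$); these satisfy the hypothesis $\dim(X)=\dim(Y)+1$. The first task is to identify $\nfw\big([V]\big)_{[H]}$ with the set counted by $\big(f_W\big)_{[H]}$. A parabolic subgroup of type $[H]$ is exactly $\langle t'\rangle$ for a reflection $t'$ with $V^{t'}$ in the $W$-orbit of $H$; since $\langle t'\rangle=W_{V^{t'}}$ has order two, this happens iff $t'$ is conjugate to $t$. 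Moreover "support of the element $t'$" and "support of the parabolic $\langle t'\rangle$" are the same subset of $S$ (both equal the smallest $J$ with $t'\in\langle J\rangle$), so $\langle t'\rangle$ is full support iff $t'$ is. Finally, when $n\ge2$ the core of a full-support rank-one parabolic is automatically empty (hence of type $[V]$): a nonempty core $\{s\}$ would force $\langle s\rangle\le\langle t'\rangle$, so $t'=s$ and the support would be the proper subset $\{s\}\subsetneq S$. Hence $\big(f_W\big)_{[H]}=\big|\nfw\big([V]\big)_{[H]}\big|$.

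Next I would evaluate the right-hand side of Theorem~\ref{thm:main} in this instance. Since $\AAA^V=\AAA_W$, the Orlik--Solomon exponents $b_i^V$ are the exponents $e_i$ of $W$ (see the discussion following \eqref{eq:defn:OS_exps}), so $\prod_{i=2}^{\dim(X)}(b_i^X-1)=\prod_{i=2}^n(e_i-1)$. The setwise stabilizer of $V$ is $N(V)=W$ and the pointwise stabilizer is $W_V=\{e\}$, so $[N(V):W_V]=|W|$. Finally $u_{[V],[H]}$ is the number of hyperplanes of $\AAA_W$ of type $[H]$, which is the size $[W:N(H)]$ of the $W$-orbit of $H$. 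Plugging these in,
\[
\big(f_W\big)_{[H]}=\frac{2\cdot[W:N(H)]}{|W|}\cdot\prod_{i=2}^n(e_i-1)=\frac{2}{|N(H)|}\cdot\prod_{i=2}^n(e_i-1)=\frac{1}{[N(H):W_H]}\cdot\prod_{i=2}^n(e_i-1),
\]
where the last equality uses that $W_H=\langle t\rangle$ has order $2$, whence $|N(H)|=2\,[N(H):W_H]$.

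It then remains to rewrite the numerator, and for this I would invoke the classical duality $e_i+e_{n+1-i}=h$ for the exponents of an irreducible finite Coxeter group, together with $e_1=1$ and $e_n=h-1$. From these, $e_i-1=e_i-e_1=e_n-e_{n+1-i}=h-1-e_{n+1-i}$, so reindexing by $j=n+1-i$ gives $\prod_{i=2}^n(e_i-1)=\prod_{j=1}^{n-1}(h-1-e_j)$, which yields the stated formula. None of these steps is a genuine obstacle; the only points requiring care are the bookkeeping of the first paragraph — why "conjugate to $t$" is the same as "fixed hyperplane in the orbit $[H]$" and why the core is forced to be trivial, both leaning on the standing hypothesis $n\ge2$ — and tracking the indices correctly in the exponent-duality manipulation.
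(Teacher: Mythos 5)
Your proposal is correct and follows essentially the same route as the paper: both instantiate Theorem~\ref{thm:main} with $[X]=[V]$ and $[Y]=[H]$, note that the core of a full-support reflection is forced to be empty, substitute $b_i^V=e_i$, $[N(V):W_V]=|W|$, $u_{[V],[H]}=[W:N(H)]$, and finish with $|W_H|=2$ and the exponent duality $e_i=h-e_{n+1-i}$. Your version merely spells out the bookkeeping (conjugacy of reflections versus $W$-orbits of hyperplanes, and why the core is trivial) in more detail than the paper does.
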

\begin{proof}
A reflection $t$ of full support corresponds to a parabolic subgroup $W_H:=\{\op{id},t\}$ of full support where $H=V^t$. In this case, the maximum standard parabolic subgroup of $W$ contained in $W_H$ has to be the identity subgroup $\{\op{id}\}\leq W$ (since $t$ cannot be a simple reflection) which corresponds to an empty core $I=\emptyset$. The parabolic type of $\{\op{id}\}$ is then the orbit $[V]$ of the ambient space $V$ and we will have $u_{[V],[H]}=[W:N(H)]$. Since $b_i^V=e_i$, we will have by Theorem~\ref{thm:main}
\[
\big(f_W\big)_{[H]}=\Big|\nfw\big([V]\big)_{[H]}\Big|=\dfrac{2\cdot[W:N(H)]}{|W|}\cdot  \prod_{i=2}^n(e_i-1)=\dfrac{\prod_{i=1}^{n-1}(h-1-e_i)}{[N(H):W_H]},
\]
where in the last equality we used that $|W_H|=2$ and that $e_i=h-e_{n+1-i}$ (this is known as exponent duality, see for instance \cite[Lemma~3.16]{Humphreys}).
\end{proof}

In our next corollary we give the enumeration of parabolic subgroups $W'\leq W$ of full support that are simple extensions of some standard parabolic subgroup, but without keeping track of the parabolic type of $W'$.

\begin{corollary}\label{cor:nfwse:enum}
Let $W$ be an irreducible, finite Coxeter group and let $[X]$ be a parabolic type for $W$. Then, the set $\nfwse\big([X]\big)$ of parabolic subgroups of full support, that are simple extensions of some standard parabolic subgroup of type $[X]$, has size given by the formula
\[
\Big|\nfwse\big([X]\big)\Big|=\dfrac{2\cdot|\AAA^X|}{[N(X):W_X]}\cdot\prod_{i=2}^{\dim(X)}(b_i^X-1),
\]
where $|\AAA^X|$ is the number of hyperplanes in $\AAA^X$ and the remaining invariants are as in Thm.~\ref{thm:main}.
\end{corollary}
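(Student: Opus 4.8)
The plan is to deduce Corollary~\ref{cor:nfwse:enum} directly from Theorem~\ref{thm:main} by summing over the possible parabolic types $[Y]$ of the extension. By Definition~\ref{defn:nfwse}, a parabolic subgroup $W'$ in $\nfwse\big([X]\big)$ is full support, has core of parabolic type $[X]$, and is a simple extension of its core; since $W'$ is a simple extension of a parabolic subgroup of type $[X]$, its own parabolic type $[Y]$ must satisfy $\dim(Y)=\dim(X)-1$. Conversely, whenever $\dim(Y)=\dim(X)-1$ and $W'$ is full support with core of type $[X]$, the discussion preceding Definition~\ref{defn:nfwse} shows $W'$ is automatically a simple extension of its core. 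Hence $\nfwse\big([X]\big)$ is the disjoint union $\bigsqcup_{[Y]}\nfw\big([X]\big)_{[Y]}$ over all parabolic types $[Y]$ with $\dim(Y)=\dim(X)-1$, and so
\[
\Big|\nfwse\big([X]\big)\Big|=\sum_{[Y]\,:\,\dim(Y)=\dim(X)-1}\Big|\nfw\big([X]\big)_{[Y]}\Big|.
\]

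First I would apply Theorem~\ref{thm:main} to each summand, which gives
\[
\Big|\nfwse\big([X]\big)\Big|=\sum_{[Y]\,:\,\dim(Y)=\dim(X)-1}\dfrac{2\cdot u_{[X],[Y]}}{[N(X):W_X]}\cdot\prod_{i=2}^{\dim(X)}(b_i^X-1)=\dfrac{2}{[N(X):W_X]}\cdot\prod_{i=2}^{\dim(X)}\big(b_i^X-1\big)\cdot\sum_{[Y]}u_{[X],[Y]},
\]
since the factor $\tfrac{2}{[N(X):W_X]}\prod_{i=2}^{\dim(X)}(b_i^X-1)$ does not depend on $[Y]$ (this is exactly the content of Remark~\ref{rem:contant_ratio}). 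Then I would observe that by Definition~\ref{defn:OS:U}, the Orlik-Solomon number $u_{[X],[Y]}$ counts flats of type $[Y]$ contained in a fixed representative $X\in[X]$; when $\dim(Y)=\dim(X)-1$ these are precisely the hyperplanes of the restricted arrangement $\AAA^X$, and summing over all such $[Y]$ partitions $\AAA^X$ according to parabolic type. Therefore $\sum_{[Y]:\dim(Y)=\dim(X)-1}u_{[X],[Y]}=|\AAA^X|$, which yields the claimed formula.

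This argument is essentially a bookkeeping consolidation of Theorem~\ref{thm:main}, so there is no real obstacle; the only point that deserves a sentence of care is the equivalence between ``is a simple extension of its core'' and ``has parabolic type one dimension below its core,'' which has already been recorded in the paragraph preceding Definition~\ref{defn:nfwse} (via simultaneous conjugation to standard parabolics, as in \cite[Prop.~2.4]{verges}). One should also note that the dimension condition $\dim(Y)=\dim(X)-1$ is exactly the hypothesis under which Theorem~\ref{thm:main} applies, so every summand is legitimately covered. With these remarks in place the corollary follows immediately.
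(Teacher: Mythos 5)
Your proposal is correct and follows exactly the paper's own argument: decompose $\nfwse\big([X]\big)$ as the disjoint union of the sets $\nfw\big([X]\big)_{[Y]}$ over parabolic types $[Y]$ with $\dim(Y)=\dim(X)-1$, apply Theorem~\ref{thm:main} to each piece, and use that $\sum_{[Y]}u_{[X],[Y]}=|\AAA^X|$ by Definition~\ref{defn:OS:U}. The extra care you take in justifying the equivalence between ``simple extension of the core'' and the dimension condition is welcome but not a departure from the paper's route.
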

\begin{proof}
This is an immediate corollary of Theorem~\ref{thm:main}; we only use that $\nfwse\big([X]\big)$ is a disjoint union of the sets $\nfw\big([X]\big)_{[Y]}$ for which $\dim(X)=\dim(Y)+1$ and that 
\[
\sum_{[Y]\in\LLL_{\AAA}/W\atop \dim(Y)=\dim(X)-1}u^{}_{[X],[Y]}=|\AAA^X|,
\]
which is just Definition~\ref{defn:OS:U}.
\end{proof}

We now give a proof of Chapoton's formula (Proposition~\ref{Prop:intro:chap_form}) but as a corollary of Theorem~\ref{thm:main}.

\begin{corollary}\label{cor:chap_form}
The number $f_W$ of reflections of full support in a finite, irreducible Coxeter group $W$ of rank $n$ is given by the formula
\begin{equation}
f_W=\dfrac{nh}{|W|}\cdot\prod_{i=2}^n(e_i-1),\label{eq:main:chapoton_form}
\end{equation}
where $h:=e_n+1$ is the Coxeter number of $W$ and $e_1,\ldots,e_n$ are the exponents of $W$.
\end{corollary}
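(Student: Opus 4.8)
The plan is to derive Chapoton's formula \eqref{eq:main:chapoton_form} as the special case of Theorem~\ref{thm:main} corresponding to the parabolic type of a single reflection, by the same bookkeeping that already appears in the proof of Corollary~\ref{cor:chap_form_krew} but summed over all conjugacy classes of reflections. First I would observe that a reflection $t\in W$ of full support corresponds to a rank-one parabolic subgroup $W_H=\{\op{id},t\}$ of full support, where $H=V^t$, whose core is the empty set $I=\emptyset$; hence the relevant parabolic type for $[X]$ is the orbit $[V]$ of the whole ambient space, with $\dim(V)=n$, $W_V=W$, $N(V)=W$, and Orlik--Solomon exponents $b_i^V=e_i$.

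Next I would apply Corollary~\ref{cor:nfwse:enum} with $[X]=[V]$: since every full-support parabolic subgroup that is a simple extension of the trivial subgroup is exactly a full-support reflection, we get $f_W=\big|\nfwse([V])\big|$, and the formula of that corollary specializes to
\[
f_W=\dfrac{2\cdot|\AAA^V|}{[N(V):W_V]}\cdot\prod_{i=2}^{n}(e_i-1)=2N\cdot\prod_{i=2}^{n}(e_i-1),
\]
because $\AAA^V=\AAA_W$ has $N$ hyperplanes and $[N(V):W_V]=[W:W]=1$. (Equivalently one could sum Corollary~\ref{cor:chap_form_krew} over the hyperplane orbits $[H]$, using $\sum_{[H]}u_{[V],[H]}=\sum_{[H]}[W:N(H)]=N$; I would mention this as the alternative route.)

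The only remaining step is numerological: rewrite $2N\cdot\prod_{i=2}^n(e_i-1)$ in the form $\dfrac{nh}{|W|}\cdot\prod_{i=2}^n(e_i-1)$. This follows from the classical identities $|\AAA_W|=N=\sum_{i=1}^n e_i$ and $|W|=\prod_{i=1}^n(e_i+1)$, together with the relation $nh=2N$ (equivalently $h=e_n+1$ and exponent duality $e_i+e_{n+1-i}=h$, which gives $2N=2\sum e_i=\sum(e_i+e_{n+1-i})=nh$). Substituting $N=nh/2$ yields $2N=nh$, but to match the stated shape with $|W|$ in the denominator I would instead note that Chapoton's formula as written has an extra factor $1$: comparing \eqref{eq:main:chapoton_form} with $f_W=2N\prod(e_i-1)$ forces $2N=nh/|W|\cdot(\text{something})$, so in fact the cleanest presentation is to invoke \eqref{eq:intro:|W|f_W=blah} directly — namely $|W|\cdot f_W=N\cdot 2\beta(\AAA_W)=2N\prod_{i=2}^n(e_i-1)$ by \eqref{eq:intro:b(A_W)} — and then divide by $|W|$ and use $2N=nh$ to conclude. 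I expect the only real subtlety to be making sure the factor-of-two bookkeeping between ``reflections of full support'', ``faraway hyperplanes of $C_0$'', and Crapo's $\beta$ lines up correctly; this is exactly the content of Corollary~\ref{cor:beta=half_chambers} and Lemma~\ref{lem: double_counting}, so there is no genuine obstacle, only care with constants.
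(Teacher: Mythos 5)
Your overall strategy (specialize Corollary~\ref{cor:nfwse:enum} to $[X]=[V]$) is exactly the paper's, but your execution contains a concrete error that derails the computation: you set $W_V=W$, whereas the \emph{pointwise} stabilizer of the full ambient space $V$ is the trivial group $\{\op{id}\}$ — only the identity fixes every point of $V$. Hence $[N(V):W_V]=[W:\{\op{id}\}]=|W|$, not $1$, and Corollary~\ref{cor:nfwse:enum} gives directly
\[
f_W=\Big|\nfwse\big([V]\big)\Big|=\dfrac{2\cdot|\AAA|}{|W|}\cdot\prod_{i=2}^{n}(e_i-1)=\dfrac{nh}{|W|}\cdot\prod_{i=2}^{n}(e_i-1),
\]
using $2|\AAA|=2N=nh$. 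Your value $2N\cdot\prod_{i=2}^n(e_i-1)$ is off by the factor $|W|$, and your subsequent attempt to reconcile it (``Chapoton's formula as written has an extra factor $1$'', ``$2N=nh/|W|\cdot(\text{something})$'') does not diagnose the source of the discrepancy; it is not that the formula needs massaging, it is that the index $[N(V):W_V]$ was computed incorrectly.

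Your two fallback routes are both sound: invoking the double-counting identity $|W|\cdot f_W=2N\cdot\beta(\AAA_W)$ (which is Lemma~\ref{lem: double_counting} with $\PPP=\AAA$ together with Corollary~\ref{cor:suppor_interp} and $W$-equivariance of the intersection patterns), or summing Corollary~\ref{cor:chap_form_krew} over the hyperplane orbits using $\sum_{[H]}[W:N(H)]=N$. Either would rescue the proof, and the first is essentially the introduction's sketch. But you should fix the stabilizer computation rather than route around it, since the corrected direct specialization is the one-line argument the corollary is designed to admit.
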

\begin{proof}
As in the proof of Corollary~\ref{cor:chap_form_krew} we have that $f_W=\Big|\nfwse\big([V]\big)\Big|$. The statement now follows after Corollary~\ref{cor:nfwse:enum} since $N(V)=W$, $W_V=\{\op{id}\}$, $b_i^V=e_i$, and by the formula $2|\AAA|=hn$ (see for instance \cite[Prop.~3.18]{Humphreys}).
\end{proof}

\subsection{Further questions}\label{ssec:extras}

We finish here with a short discussion of the natural extensions but also limitations of Theorem~\ref{thm:main}. In particular, we give in Table~\ref{table:H4:option2} the cardinalities, computed using SageMath \cite{sagemath}, of all sets $\nfw(I)_{[Y]}$ for the group $H_4$.

\subsubsection{When the group $W$ is reducible}
\label{sssec:extra:W_reduc}
To make the presentation of our results easier, we have always assumed so far that the finite Coxeter group $W$ is irreducible. This is not a real problem and the general case can easily be reduced to our Theorem~\ref{thm:main}. 

Consider for example a \emph{reducible} group $W$ whose decomposition into irreducibles is given by $W=W_1\times\cdots\times W_s$ and let $W',W''\leq W$  be two parabolic subgroups with $\op{rank}(W'')=\op{rank}(W')+1$. Their irreducible decompositions are of the form $W'=W'_1\times\cdots\times W'_s$ and $W''=W''_1\times \cdots\times W''_s$, where $W'_i$ and $W''_i$ are (possibly trivial) parabolic subgroups of $W_i$. Now, since $\op{rank}(W'')=\op{rank}(W')+1$, we must have that $W'_i=W''_i$ for all but one index $i=1,\ldots,s$; let's assume that this index is $i=1$, so that $\op{rank}(W''_1)=\op{rank}(W'_1)+1$. 

Now, if $W''$ is assumed full support, then all $W''_i$ must be full support in $W_i$ and if $W'$ is assumed to be standard, then all $W'_i$ must be standard in $W_i$. This forces that $W'_i=W''_i=W_i$ for $i\neq 0$ since $W'_i=W''_i$ must be both standard and full support parabolic subgroups of $W_i$. This means that we have reduced the counting of parabolic subgroups of full support in $W$ that are simple extensions of their core, to doing so in each of the irreducible components $W_i$ of $W$, which is exactly the setting of Theorem~\ref{thm:main}.

\subsubsection{When the parabolic subgroups are not full support}
\label{sssec:extra:not_full_sup}

After the definition of core and support in \eqref{eq:defn:I_and_J}, we have only addressed the case of parabolic subgroups whose support equals the whole set of simple reflections $S$. This is also a superficial restriction, since any parabolic subgroup $W'\leq W$ with support $J$ is full support in the standard parabolic subgroup $\langle J\rangle\leq W$. It may be that $\langle J\rangle$ is a \emph{reducible} Coxeter group but as we show in \S\ref{sssec:extra:W_reduc} our Theorem~\ref{thm:main} naturally extends to that case.

\subsubsection{When we want to keep track of the core explicitly, not just its parabolic type}\label{sssec:extra:I_vs_[V^I]}

Even though in \eqref{eq:defn:I_and_J} we defined the core of a subgroup $W'\leq W$ as a specific subset $I\subseteq S$, the Theorem~\ref{thm:main} only gives formulas for the cardinality of the sets $\nfwse([X])$ (and their refinements by type) where we only keep track of the parabolic type $[X]$ of the core $I$. It is natural to ask for a further refinement of the theorem and a formula for the size of the sets $\nfwse(I)$ themselves.

As the following Tables~\ref{table:H4:example},\ref{table:E8:example} suggest, a \emph{product} formula seems unlikely to exist; notice in particular the value $\nfwse(\{8\})=43$ for $E_8$. The entries of the tables were calculated via a computer using SageMath, while the Coxeter presentations given in the captions specify the choice of simple systems. We have given in bold the quantities that correspond to the results of Theorem~\ref{thm:main} and Corollary~\ref{cor:nfwse:enum} (in both groups $H_4$ and $E_8$ any two reflections are conjugate so that all singleton subsets of $S$ have the same parabolic type $[H]$). Notice that, as discussed in Remark~\ref{rem:contant_ratio}, the ratio $\Big|\nfwse\big([H]\big)_{[Y]}\Big|/u^{}_{[H],[Y]}$ is constant; it is equal to $3$ and $22/7$ respectively in $H_4$ and $E_8$.

\begin{table}[h]
\begin{tabular}{|l||c|c|c|c|c||r|}
\hline
    $u^{}_{[H],[Y]}$ & \diagbox{$[Y]$}{$\nfw\big(I\big)^{\phantom{3}}_{[Y]}$}{$I$} & $\big\{1\big\}$ & $\big\{2\big\}$ & $\big\{3\big\}$ & $\big\{4\big\}$ & $\nfw\big([H]\big)_{[Y]}$ \\\hline
    $\bm{15} \phantom{\Big(}$ & $A_1^2$ & $11$ & $12$ & $12$ & $10$ & $\bm{45}$ \\\hline 
    $\bm{10} \phantom{\Big(}$ & $A_2$   & $8$  & $7$  & $7$  & $8$  & $\bm{30}$ \\\hline 
    $\bm{6}  \phantom{\Big(}$ &$I_2(5)$ & $4$  & $4$  & $4$  & $6$  & $\bm{18}$ \\\hline \hline
    $\big|\AAA^H\big|=\bm{31} \phantom{\Big(}$ & $\nfwse\big(I\big)$ & $23$ & $23$ & $23$ & $24$ & $\nfwse\big([H]\big)=\bm{93}$ \\\hline 
\end{tabular}
\caption{$2$-dimensional nearest faraway flats for $H_4$:\quad
\includegraphics[scale=0.3]{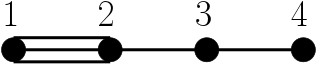}}
\label{table:H4:example}
\end{table}

\begin{table}[h]
\begin{tabular}{|l||c|c|c|c|c|c|c|c|c||r|}
\hline
    $u^{}_{[H],[Y]}$ & \diagbox{$[Y]$}{$\nfw\big(I\big)^{\phantom{3}}_{[Y]}$}{$I$} & $\big\{1\big\}$ & $\big\{2\big\}$ & $\big\{3\big\}$ & $\big\{4\big\}$ & $\big\{5\big\}$ & $\big\{6\big\}$ & $\big\{7\big\}$ & $\big\{8\big\}$ & $\nfw\big([H]\big)_{[Y]}$ \\\hline
    
    $\bm{63} \phantom{\Big(}$ & $A_1^2$ & $27$ & $25$ & $24$  & $24$ & $24$ & $24$  & $24$ & $26$ & $\bm{198}$ \\\hline
    
    $\bm{28}  \phantom{\Big(}$ &$A_2$ & $11$  & $10$  & $10$  & $10$ & $10$ & $10$ & $10$ & $17$ & $\bm{88}$ \\\hline \hline
    
    $\big|\AAA^H\big|=\bm{91} \phantom{\Big(}$ & $\nfwse\big(I\big)$ & $38$ & $35$ & $34$ & $34$ & $34$ & $34$ & $34$ & $43$ & $\nfwse\big([H]\big)=\bm{286}$ \\\hline 
\end{tabular}
\caption{$2$-dimensional nearest faraway flats for $E_8$:\ \  \includegraphics[scale=0.3]{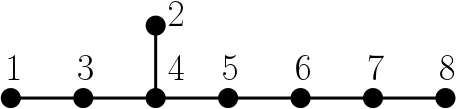} }
\label{table:E8:example}
\end{table}

\begin{remark}
In the coincidental \cite[\S3.1.5]{williams_thesis} types $A_n=S_{n+1}, B_n, I_2(m), H_3$ (also known as good reflection arrangements \cite[\S5.7]{aguiar_book}) all restricted arrangements $\AAA^X$ are combinatorially isomorphic to an irreducible reflection arrangement \cite[Thm.~5.28]{aguiar_book}, and their Orlik-Solomon exponents are the first $\dim(X)$-many exponents of $W$ \cite[\S3.3]{miller_foulkes}. Because of the first statement, the intersection patterns of orbits of flats and chambers are the same for all chambers of $\AAA^X$, and the cardinalities of the sets $\nfw(I)_{[Y]}$ with $\op{rank}(W_Y)=|I|+1$ only depend on $[Y]$ and the parabolic type of $\langle I\rangle$ (i.e., not on $I$ itself as in Tables~\ref{table:H4:example},\ref{table:E8:example}). In particular, if $W$ is a coincidental type of rank $n$ and with exponents $e_1,\ldots, e_n$, Theorem~\ref{thm:main} and the formula for $\nu_{[V^I]}$ from Remark~\ref{rem:contant_ratio} imply that
\[
\Big|\nfw(I)_{[Y]}\Big|=u^{}_{[V^I],[Y]}\cdot\prod_{i=2}^{n-|I|}\dfrac{e_i-1}{e_i+1}.
\]
\end{remark}

\subsubsection{When the parabolic subgroups are not necessarily simple extensions of their cores}\label{sssec:extra:not_simple_ext} The geometric techniques we developed in Section~\ref{sec:double_counting} seem to only be able to model the case of parabolic subgroups that are simple extensions of their cores. It is unclear how one might pursue studying the sets $\nfw(I)_{[Y]}$ without any further restrictions on the types $[Y]$. 

In Table~\ref{table:H4:option2} we give the complete picture for the group $H_4$ (presented as in Table~\ref{table:H4:example}) and its parabolic subgroups of full support. We observe again that product formulas for the cardinalities seem unlikely; for instance, there are $157$ parabolic subgroups of type $[A_2]$ that are full support and have core $I=\emptyset$. Moreover, it is clear that Remark~\ref{rem:contant_ratio} cannot be generalized to the case that the parabolic subgroups are not simple extensions of their cores. Again, the entries were calculated using SageMath and we have given in bold the quantities that can be derived by Theorem~\ref{thm:main} and Corollary~\ref{cor:nfwse:enum}. 

As a sanity check, we offer the following calculation. For each of the four types $[Y]$ of parabolic subgroups of rank $3$ (listed as $A_1\times A_2$, $A_1\times I_2(5)$, $A_3$, and $H_3$) the \emph{total number} of parabolic subgroups of type $[Y]$ that are full support must equal $u^{}_{[A_0],[Y]}-1$. To see this, note that the support of a rank-$3$ parabolic subgroup must have at least three elements; then either the subgroup is full support, or it is equal to (the group generated by) its support. This is easy to verify in Table~\ref{table:H4:option2} where we list the parabolic subgroups with respect to their cores. For example, there is a total of $299$ parabolic subgroups of type $[A_3]$ that are full support, which are made out of $197$ subgroups with core $I=\emptyset$, $87$ subgroups with core of type $[A_1]$, $5$ subgroups with core of type $[A_1^2]$, and $10$ subgroups with core of type $[A_2]$.

\begin{table}
\begin{tabular}{|c||c|ccccccccr|}
\hline
   $\Big[V^I\Big]$ & \diagbox{$I$}{$\nfw\big(I\big)^{\phantom{3}}_{[Y]}\phantom{3}$}{$[Y]$} & $A_1$ & $A_1^2$ & $A_2$ & $I_2(5)$ & $A_1\times A_2$ & $A_1\times I_2(5)$ & $A_3$ & $H_3$  & \multicolumn{1}{||r|}{$\nfwse(I)$} \\\hline\hline 

   \multirow{3}{*}{$A_0$} &$\phantom{\Big(} \emptyset \phantom{\Big)}$ & \multicolumn{1}{c|}{$42$} & $382$  & $157$  & $48$ & $457$ & $232$  & $197$ & $16$  & \multicolumn{1}{||r|}{$42$} \\\cline{2-10}

    &$\phantom{\Big(} \nfw\big([A_0]\big)_{[Y]} \phantom{\Big)}$ & \multicolumn{1}{c|}{$\bm{42}$} & $382$  & $157$ & $48$ & $457$ & $232$ & $197$ & $16$ & \multicolumn{1}{||r|}{$\nfwse([A_0])=\bm{42}$} \\\cline{2-10}
    
    &$\phantom{\Big(} u_{[A_0],[Y]} \phantom{\Big)}$ & \multicolumn{1}{c|}{$\bm{60}$} & $450$ & $200$ & $72$ & $600$ & $360$  & $300$ & $60$   & \multicolumn{1}{||r|}{$\big|\AAA\big|=\bm{60}$} \\\hline\hline

    \multirow{6}{*}{$A_1$} & $\phantom{\Big(} \big\{ 1\big\} \phantom{\Big)}$  &  & $11$ & $8$ &\multicolumn{1}{c|}{$4$} & $33$ & $24$ & $26$  & $7$ & \multicolumn{1}{||r|}{$23$}  \\\cline{2-10}
    
    &$\phantom{\Big(} \big\{ 2\big\} \phantom{\Big)}$ &  & $12$  & $7$ & \multicolumn{1}{c|}{$4$} & $33$ & $28$ & $23$ & $6$   & \multicolumn{1}{||r|}{$23$} \\\cline{2-10}
    
    &$\phantom{\Big(} \big\{ 3\big\} \phantom{\Big)}$ &  &  $12$ & $7$ & \multicolumn{1}{c|}{$4$} & $31$ & $32$ & $17$ & $8$   &\multicolumn{1}{||r|}{$23$} \\\cline{2-10}
    
    &$\phantom{\Big(} \big\{ 4\big\} \phantom{\Big)}$ &  & $10$ & $8$ &\multicolumn{1}{c|}{$6$} & $30$ & $29$ & $21$ & $8$   & \multicolumn{1}{||r|}{$24$} \\\cline{2-10}
    
    &$\phantom{\Big(} \nfw\big([A_1]\big)_{[Y]} \phantom{\Big)}$ &  & $\bm{45}$ & $\bm{30}$ & \multicolumn{1}{c|}{$\bm{18}$} & $127$ & $113$  & $87$ & $29$    & \multicolumn{1}{||r|}{$\nfwse([A_1])=\bm{93}$} \\\cline{2-10}
    
    &$\phantom{\Big(} u_{[A_1],[Y]} \phantom{\Big)}$ & $1$ & $\bm{15}$ & $\bm{10}$ & \multicolumn{1}{c|}{$\bm{6}$} & $40$ & $36$  & $30$ & $15$  & \multicolumn{1}{||r|}{$\big|\AAA^{A_1}\big|=\bm{31}$} \\\hline\hline 
    
    \multirow{5}{*}{$A^2_1$}&$\phantom{\Big(} \big\{ 1,3\big\} \phantom{\Big)}$ &  &  &  &  & $3$ & $4$ & $2$ & $1$   & \multicolumn{1}{||r|}{$10$} \\\cline{2-10}
        
    &$\phantom{\Big(} \big\{ 1,4\big\} \phantom{\Big)}$ &  &  &  &  & $3$ & $3$ & $2$ & $2$   & \multicolumn{1}{||r|}{$10$} \\\cline{2-10}
            
    &$\phantom{\Big(} \big\{ 2,4\big\} \phantom{\Big)}$ &  &  &  &  & $4$ & $3$ & $1$ & $2$    & \multicolumn{1}{||r|}{$10$} \\\cline{2-10}
    
    &$\phantom{\Big(} \nfw\big([A^2_1]\big)_{[Y]} \phantom{\Big)}$ &  & &  &  & $\bm{10}$ & $\bm{10}$ & $\bm{5}$ & $\bm{5}$   & \multicolumn{1}{||r|}{$\nfwse([A^2_1])=\bm{30}$} \\\cline{2-10}
    
    &$\phantom{\Big(} u_{[A^2_1],[Y]} \phantom{\Big)}$ &  & $1$ & $0$ & $0$ & $\bm{4}$ & $\bm{4}$  & $\bm{2}$ & $\bm{2}$   & \multicolumn{1}{||r|}{$\big|\AAA^{A^2_1}\big|=\bm{12}$} \\\hline\hline

    \multirow{4}{*}{$A_2$} &$\phantom{\Big(} \big\{ 2,3\big\} \phantom{\Big)}$ &  &  &  &  & $3$ & $0$ & $5$ & $2$  &   \multicolumn{1}{||r|}{$10$}  \\\cline{2-10}
    
    &$\phantom{\Big(} \big\{ 3,4\big\} \phantom{\Big)}$ &  &  &  &  & $2$ & $0$ & $5$ &  $3$ &  \multicolumn{1}{||r|}{$10$} \\\cline{2-10}
    
    &$\phantom{\Big(} \nfw\big([A_2]\big)_{[Y]} \phantom{\Big)}$ &  & &  &  & $\bm{5}$ & $\bm{0}$ & $\bm{10}$ & $\bm{5}$ &   \multicolumn{1}{||r|}{$\nfwse([A_2])=\bm{20}$} \\\cline{2-10}
    
    &$\phantom{\Big(} u_{[A_2],[Y]} \phantom{\Big)}$ &  &  & $1$ & $0$ & $\bm{3}$ & $\bm{0}$  & $\bm{6}$ & $\bm{3}$   & \multicolumn{1}{||r|}{$\big|\AAA^{A_2}\big|=\bm{12}$} \\\hline\hline 
        
    \multirow{3}{*}{$I_2(5)$} &$\phantom{\Big(} \big\{ 1,2\big\} \phantom{\Big)}$ &  &  &  &  & $0$ & $4$  & $0$ & $4$   & \multicolumn{1}{||r|}{$8$} \\\cline{2-10}

    &$\phantom{\Big(} \nfw\big([I_2(5)]\big)_{[Y]} \phantom{\Big)}$ &  & &  &  & $\bm{0}$ & $\bm{4}$  & $\bm{0}$ & $\bm{4}$    & \multicolumn{1}{||r|}{$\nfwse([I_2(5)])=\bm{8}$} \\\cline{2-10}
    
    &$\phantom{\Big(} u_{[I_2(5)],[Y]} \phantom{\Big)}$ &  &  &  & $1$ & $\bm{0}$ & $\bm{5}$  & $\bm{0}$ & $\bm{5}$  & \multicolumn{1}{||r|}{$\big|\AAA^{I_2(5)}\big|=\bm{10}$} \\\hline
    
\end{tabular}
\caption{Numbers of parabolic subgroups of full support in $H_4$, refined by their core (the maximal standard parabolic subgroup they contain) and parabolic type.}
\label{table:H4:option2}
\end{table}

\section{Acknowledgments} 
The main ideas for this work were developed in Colombia, during ECCO 2022 (Encuentro Colombiano de Combinatoria). I would like to heartily thank the organizers of the meeting and associated CIMPA Research School (Geometric methods in combinatorics) for their hospitality and the beautiful mathematical environment they created for the participants. I would also like to thank Philippe Nadeau for fascinating discussions during ECCO and for introducing me to BBC.

\bibliographystyle{alpha}
\bibliography{biblio}

\begin{thebibliography}{HMRS19}

\bibitem[AM17]{aguiar_book}
Marcelo Aguiar and Swapneel Mahajan.
\newblock {\em Topics in hyperplane arrangements}, volume 226 of {\em
  Mathematical Surveys and Monographs}.
\newblock American Mathematical Society, Providence, RI, 2017.

\bibitem[AT06]{athan_tzan_enum}
Christos~A. Athanasiadis and Eleni Tzanaki.
\newblock On the enumeration of positive cells in generalized cluster complexes
  and {C}atalan hyperplane arrangements.
\newblock {\em J. Algebraic Combin.}, 23(4):355--375, 2006.

\bibitem[Ath04]{athan_enum}
Christos~A. Athanasiadis.
\newblock Generalized {C}atalan numbers, {W}eyl groups and arrangements of
  hyperplanes.
\newblock {\em Bull. London Math. Soc.}, 36(3):294--302, 2004.

\bibitem[ATW08]{abe_euler}
Takuro Abe, Hiroaki Terao, and Max Wakefield.
\newblock The {E}uler multiplicity and addition-deletion theorems for
  multiarrangements.
\newblock {\em J. Lond. Math. Soc. (2)}, 77(2):335--348, 2008.

\bibitem[BJV19]{verges_biane}
Philippe Biane and Matthieu Josuat-Verg\`es.
\newblock Noncrossing partitions, {B}ruhat order and the cluster complex.
\newblock {\em Ann. Inst. Fourier (Grenoble)}, 69(5):2241--2289, 2019.

\bibitem[CD22]{theo_lapl}
Guillaume Chapuy and Theo Douvropoulos.
\newblock Counting chains in the noncrossing partition lattice via the
  {$W$}-{L}aplacian.
\newblock {\em J. Algebra}, 602:381--404, 2022.

\bibitem[Cha06]{chapoton}
F.~Chapoton.
\newblock Sur le nombre de r\'{e}flexions pleines dans les groupes de {C}oxeter
  finis.
\newblock {\em Bull. Belg. Math. Soc. Simon Stevin}, 13(4):585--596, 2006.

\bibitem[Cra67]{crapo}
Henry~H. Crapo.
\newblock A higher invariant for matroids.
\newblock {\em J. Combinatorial Theory}, 2:406--417, 1967.

\bibitem[Dou99]{Douglass_A^X_free}
J.~Matthew Douglass.
\newblock The adjoint representation of a reductive group and hyperplane
  arrangements.
\newblock {\em Represent. Theory}, 3:444--456, 1999.

\bibitem[DPR14]{desc_alg}
J.~Matthew Douglass, G\"{o}tz Pfeiffer, and Gerhard R\"{o}hrle.
\newblock Cohomology of {C}oxeter arrangements and {S}olomon's descent algebra.
\newblock {\em Trans. Amer. Math. Soc.}, 366(10):5379--5407, 2014.

\bibitem[FR05]{fomin_read}
Sergey Fomin and Nathan Reading.
\newblock Generalized cluster complexes and {C}oxeter combinatorics.
\newblock {\em Int. Math. Res. Not.}, (44):2709--2757, 2005.

\bibitem[GZ83]{greene_zaslavsky}
Curtis Greene and Thomas Zaslavsky.
\newblock On the interpretation of {W}hitney numbers through arrangements of
  hyperplanes, zonotopes, non-{R}adon partitions, and orientations of graphs.
\newblock {\em Trans. Amer. Math. Soc.}, 280(1):97--126, 1983.

\bibitem[HMRS19]{stump_free}
Torsten Hoge, Toshiyuki Mano, Gerhard R\"{o}hrle, and Christian Stump.
\newblock Freeness of multi-reflection arrangements via primitive vector
  fields.
\newblock {\em Adv. Math.}, 350:63--96, 2019.

\bibitem[Hum90]{Humphreys}
James~E. Humphreys.
\newblock {\em Reflection groups and {C}oxeter groups}, volume~29 of {\em
  Cambridge Studies in Advanced Mathematics}.
\newblock Cambridge University Press, Cambridge, 1990.

\bibitem[JV15]{verges}
Matthieu Josuat-Verg\`es.
\newblock Refined enumeration of noncrossing chains and {H}ook formulas.
\newblock {\em Ann. Comb.}, 19(3):443--460, 2015.

\bibitem[Kan01]{Kane}
R.~Kane.
\newblock {\em Reflection groups and invariant theory}, volume~5 of {\em CMS
  Books in Mathematics/Ouvrages de Math\'{e}matiques de la SMC}.
\newblock Springer-Verlag, New York, 2001.

\bibitem[Mil15]{miller_foulkes}
Alexander~R. Miller.
\newblock Foulkes characters for complex reflection groups.
\newblock {\em Proc. Amer. Math. Soc.}, 143(8):3281--3293, 2015.

\bibitem[OS83]{OS_Cox_Arr}
Peter Orlik and Louis Solomon.
\newblock Coxeter arrangements.
\newblock In {\em Singularities, {P}art 2 ({A}rcata, {C}alif., 1981)},
  volume~40 of {\em Proc. Sympos. Pure Math.}, pages 269--291. Amer. Math.
  Soc., Providence, RI, 1983.

\bibitem[OT92]{OT_book}
Peter Orlik and Hiroaki Terao.
\newblock {\em Arrangements of hyperplanes}, volume 300 of {\em Grundlehren der
  mathematischen Wissenschaften [Fundamental Principles of Mathematical
  Sciences]}.
\newblock Springer-Verlag, Berlin, 1992.

\bibitem[OT93]{OT_A^X_is_free}
Peter Orlik and Hiroaki Terao.
\newblock Coxeter arrangements are hereditarily free.
\newblock {\em Tohoku Math. J. (2)}, 45(3):369--383, 1993.

\bibitem[SD]{sagemath}
The Sage~Developers.
\newblock {\em {S}ageMath, the {S}age {M}athematics {S}oftware {S}ystem
  ({V}ersion 9.0)}.

\bibitem[Som05]{sommers}
Eric~N. Sommers.
\newblock {$B$}-stable ideals in the nilradical of a {B}orel subalgebra.
\newblock {\em Canad. Math. Bull.}, 48(3):460--472, 2005.

\bibitem[Sta07]{stanley_book}
Richard~P. Stanley.
\newblock An introduction to hyperplane arrangements.
\newblock In {\em Geometric combinatorics}, volume~13 of {\em IAS/Park City
  Math. Ser.}, pages 389--496. Amer. Math. Soc., Providence, RI, 2007.

\bibitem[Tay12]{taylor}
D.~E. Taylor.
\newblock Reflection subgroups of finite complex reflection groups.
\newblock {\em J. Algebra}, 366:218--234, 2012.

\bibitem[Ter81]{terao_exps}
Hiroaki Terao.
\newblock Generalized exponents of a free arrangement of hyperplanes and
  {S}hepherd-{T}odd-{B}rieskorn formula.
\newblock {\em Invent. Math.}, 63(1):159--179, 1981.

\bibitem[Thi17]{thiel}
Marko Thiel.
\newblock The number of roots of full support.
\newblock {\em Bull. Lond. Math. Soc.}, 49(2):304--306, 2017.

\bibitem[Wil13]{williams_thesis}
Nathan Williams.
\newblock {\em Cataland}.
\newblock ProQuest LLC, Ann Arbor, MI, 2013.
\newblock Thesis (Ph.D.)--University of Minnesota.

\bibitem[Yos14]{yoshi_free}
Masahiko Yoshinaga.
\newblock Freeness of hyperplane arrangements and related topics.
\newblock {\em Ann. Fac. Sci. Toulouse Math. (6)}, 23(2):483--512, 2014.

\bibitem[Zas75]{zaslavsky_facing_up}
Thomas Zaslavsky.
\newblock Facing up to arrangements: face-count formulas for partitions of
  space by hyperplanes.
\newblock {\em Mem. Amer. Math. Soc.}, 1(issue 1, 154):vii+102, 1975.

\end{thebibliography}

\end{document}